\newcommand{\R}{{\mathbb R}}
\newcommand{\EE}{{\mathbb E}}
\newcommand{\PP}{{\mathbb P}}
\newcommand{\N}{{\mathbb N}}
\newcommand{\cX}{{\cal{X}}}
\newcommand{\cN}{{\cal{N}}}
\newcommand{\xx}{{\mathbf{x}}}
\newcommand{\yy}{{\mathbf{y}}}
\newcommand{\uu}{{\mathbf{u}}}
\newcommand{\1}{{\mathbf{1}}}
\newtheorem{theorem}{Theorem}[section]
\newtheorem{proposition}[theorem]{Proposition}
\newtheorem{corollary}{Corollary}[theorem]
\newtheorem{algorithm}[theorem]{Algorithm}
\title{State estimation for aoristic models}
\author[1,2]{M.N.M.\ van Lieshout}
\author[1,2]{R.L.\ Markwitz}
\affil[1]{Centrum Wiskunde \& Informatica (CWI), P.O. Box 94079, 
1090 GB Amsterdam, The Netherlands}
\affil[2]{Department of Applied Mathematics, University of Twente, 
P.O. Box 217, 7500 AE Enschede, The Netherlands}
\date{}
\begin{document}

\maketitle

\noindent
ABSTRACT: Aoristic data can be
described by a marked point process in time in which the points cannot
be observed directly but are known to lie in observable intervals, the
marks. We consider Bayesian state estimation for the latent points when
the marks are modelled in terms of an alternating renewal process in
equilibrium and the prior is a Markov point point process. We derive the
posterior distribution, estimate its parameters and present some examples
that illustrate the influence of the prior distribution.\\[0.1in]

\noindent
KEYWORDS: alternating renewal process, aoristic data, marked temporal
point process, Markov chain Monte Carlo methods, Markov point process
state estimation.

\section{Introduction}

Inference for point processes on the real line has been dominated
by a dynamic approach based on the stochastic intensity or hazard
function \citep{Brem72,Karr91,Last95}. Such an approach is quite
natural, is amenable to likelihood-based inference and allows the
utilisation of powerful tools from martingale theory. However, it
breaks down completely when censoring breaks the orderly
progression of time. In such cases, state estimation techniques
are needed that are able to fill in the gaps \citep{BrixDigg01,Lies16}.

In this paper, we concentrate on aoristic data \citep{RatcMcCu98} in
which the points may not be observed directly but upper and lower
bounds exist. Such data are commonplace in criminology. Suppose, for
example, that a working person leaves his place of residence early in
the morning and returns late in the afternoon to discover that the
residence has been burgled. Then the exact time of the break-in
cannot be determined, but it must have happened during the absence
of the resident. In rare cases, a burglar may also be caught in the
act, in which case the time of break-in coincides with the bounds.
The actual break-in times tend to be estimated by ad hoc, naive
approaches, e.g.\ the mid-point of the reported interval \citep{Helm08}
or the length-weighted empirical probability mass function of the
interval lengths. An obvious disadvantage of such methods is that
dependencies between offence times, such as the near-repeat effect
\citep{Bern09}, are ignored. 

The focus of this research is to develop a Bayesian inference
framework for aoristic data that is able to infer missing 
information and takes into account expert knowledge and 
interaction. Specifically, in Section~\ref{sec:renewal} we 
formalise aoristic censoring as a marked point process in time 
in which the points cannot be observed directly but are known 
to lie in observable intervals, the marks. Upon employing a 
Markov point process prior \citep{Lies00}, the posterior 
distribution of the point locations is derived in 
Section~\ref{sec:posterior}. In Section~\ref{sec:mcmc} we turn to 
Monte Carlo based inference. The paper is concluded by 
simulated examples that demonstrate the influence of the prior
(Section~\ref{sec:sim}) and some reflections on future research.

\section{Marked point process formulation}
\label{sec:renewal}

\subsection{Alternating renewal processes for censoring}
\label{sec:censoring}

In this paper, we consider a censoring mechanism based on an
alternating renewal process. Let $C_1, C_2,\,...$ be a sequence of
random $2$-vectors such that $C_i = (Y_i, Z_i)$, $i \in \N$, are
independent and identically distributed \citep{Asmu03,Ross96}. 
Furthermore, assume that $C_i$ has joint probability density
function $f$ on $(\R^+)^2$.  Introduce $T_i$ = $Y_i + Z_i$, set
$S_0 = 0$ and let, for $n\in \N$, $S_n = \sum_{i=1}^n T_i$
be the time of the $n$th renewal. Note that no renewal occurs at
the end of a $Y$-phase. Furthermore, assume that $0 < \EE T_1
< \infty$. Then, by the strong law of large numbers, 
%Grondslagen p.125
\begin{equation}
N(t) =  \sup \left\{ n\in\N_0 : S_n \leq t \right\}, 
\quad t \geq 0,
\label{e:Nt}
\end{equation}
is well-defined and the supremum is attained with probability one.
Furthermore, the renewal function 
\[
M(t) = \EE N(t) = \sum_{n=1}^\infty \PP(S_n \leq t), \quad t \geq 0,
\]
is finite and absolutely continuous with respect to Lebesgue measure 
\citep[Chapter~3]{Ross96}.
\begin{figure}
    \centering
    \begin{tikzpicture}
    \draw [<->, thick] (-5,0) -- (5,0);
    \draw [ultra thick] (1, -.15) -- (1, .15);
    \filldraw [black] (-3,0) circle (2pt) node [above] {$S_{i}$};
    \filldraw [black] (2.6,0) circle (2pt) node [above] {$S_{i+1}$};
    % \draw (-1,0) -- (-1, .2) node [above] {$t$};
    \draw [ultra thick] (-3,0) -- (1,0); 
    \node at (-1, -.5) {$Y_{i + 1}$};
    \node at (1.8, -.5) {$Z_{i + 1}$};
    \node at (-5.5, 0) {$(1)$};
    \node at (-1, .3) {$t$};
    \draw [<->, thick] (-5,-2) -- (5,-2);
    \filldraw [ultra thick] (0.4,-2.15) -- (0.4, -1.85);
    \filldraw [black] (-3.4,-2) circle (2pt) node [above] {$S_{i}$};
    \filldraw [black] (3,-2) circle (2pt) node [above] {$S_{i+1}$};
    \draw [thick] (1,-2) -- (1, -1.8) node [above] {$t$}; 
    \node at (-1.4, -2.5) {$Y_{i + 1}$};
    \node at (1.7, -2.5) {$Z_{i + 1}$};
    \node at (-5.5, -2) {$(2)$};
    \end{tikzpicture}
\caption{View of two alternating renewal processes. In $(1)$, a point
  $t$ of the point process $X$ falls in a $Y$-phase. In criminological
  terms, this may correspond to a person being away from home and
  being burgled; the exact time of burglary is unknown. Hence the
  whole interval is recorded. In $(2)$, a point $t$ of the point
  process $X$ falls in a $Z$-phase. This corresponds to a person
  being home and being burgled. In this case, the exact time of
  burglary is known, thus the exact point is recorded. Renewal times
  are denoted by $S_i,\, S_{i+1}$ with $S_0 = 0$.}
 \label{fig:arp}
\end{figure}
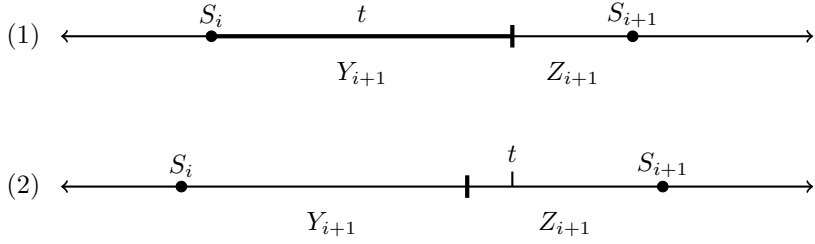
\\ \\
An alternating renewal process can be used for censoring in the 
following way. Let $X$ be a temporal point process on $\R^+$ 
\citep{Brem72,DaleVere88,Last95} and let each point $t\in X$ be
associated with an alternating renewal process, independently of
other points of $X$. Now, if $t$ happens to fall in some $Z$-phase, 
$t$ is observed perfectly, whereas $t$ is observed aoristically if 
it falls in a $Y$-phase. The censoring mechanism is illustrated 
in Figure~\ref{fig:arp}.

\subsection{Age and excess distribution}
\label{sec:markdistribution}

Aoristic data generated by the censoring mechanism described
in Section~\ref{sec:censoring} can be expressed in terms of the age 
and excess (also referred to as residual lifetime) with respect to the 
$Y$-process. Recall that for an alternating renewal process, the age 
with respect to the $Y$-process is defined as
\begin{align*}
    A(t) &= (t - S_{N(t)}) \, \1\{S_{N(t)}+Y_{N(t)+1} > t\}, 
\quad t \geq 0,
\end{align*}
the excess with respect to the $Y$-process as
\begin{align*}
    B(t) &= (S_{N(t) + 1} - Z_{N(t) + 1} - t)\,
     \1\{S_{N(t)}+Y_{N(t)+1} > t\}, \quad t \geq 0.
\end{align*}

Let us parametrise an interval on the real line by its left-most
point and length. In other words, a pair $(a, l) \in \R \times \R^+$
corresponds to the closed interval $[a, a+l]$. Then, the recorded
interval of a latent point $t\geq 0$ can be written as $t + [-A(t), 
B(t)]$. Note that $[-A(t), B(t)]$ is parametrised by 
$I(t) = (- A(t), A(t) + B(t) ) \in \R \times \R^+$. 

First, let us consider the joint distribution of age and excess with
respect to the $Y$-process.
%\citep[p.8]{Tijm86}

\begin{proposition}
Let $N$ be an alternating renewal process as in (\ref{e:Nt}). 
Assume that $T_1$ is absolutely continuous with respect to Lebesgue 
measure and that $0 < \EE T_1 < \infty$. Then, for $t\geq 0$,
the joint distribution of $(A(t), B(t))$ has an atom at $(0,0)$ of size 
\[
c(t) = 
F_Y(t) - \int_0^t \left[ 1 - F_Y(t-s) \right] dM(s)
\]
and, for $0 \leq u \leq t, v \geq 0$,
\[
\PP( A(t) \leq u; B(t) \leq v ) = c(t) + 
\left[ F_Y(t+v) - F_Y(t) \right] \1\{ u = t \}
+ \int_{t-u}^t \left[
 F_Y( t+v-s) - F_Y(t-s)
\right] dM(s).
\]
Here $F_Y$ denotes the cumulative distribution function of $Y_1$
and $M$ is the renewal function.
\label{p:jointAB}
\end{proposition}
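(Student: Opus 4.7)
The strategy is a standard renewal decomposition: partition the event according to whether $t$ falls in a $Y$-phase or a $Z$-phase, and within the $Y$-phase condition on the location $s = S_{N(t)}$ of the most recent renewal before $t$, exploiting the fact that $Y_{N(t)+1}$ is independent of $(Y_i, Z_i)_{i \leq N(t)}$ and hence of $S_{N(t)}$.

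For the atom, note first that the indicator appearing in both $A(t)$ and $B(t)$ vanishes precisely when $t$ lies in a $Z$-phase, so $\PP(A(t)=0, B(t)=0)$ equals the probability that $t$ lies in a $Z$-phase. I will compute the complementary $Y$-phase probability. Time $t$ lies in a $Y$-phase iff either $N(t)=0$ and $Y_1 > t$, or $N(t) = n \geq 1$ with $S_n \leq t < S_n + Y_{n+1}$; the latter event automatically gives $S_{n+1} > t$, so the requirement that $n$ be exactly the index of the last renewal imposes no extra condition. Conditioning on $S_n$ and using independence yields $\PP(S_n \leq t, Y_{n+1} > t - S_n) = \int_0^t [1-F_Y(t-s)]\,dF_{S_n}(s)$; summing over $n \geq 1$ and recognising $M = \sum_n F_{S_n}$ gives
\[
\PP(\text{$Y$-phase at }t) = 1 - F_Y(t) + \int_0^t \bigl[1 - F_Y(t-s)\bigr]\,dM(s),
\]
whose complement is exactly $c(t)$.

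For the joint distribution with $0 \leq u \leq t$ and $v \geq 0$, I decompose $\PP(A(t)\leq u, B(t) \leq v)$ as the atom $c(t)$ plus the contribution from the $Y$-phase subject to the two inequalities. On the $Y$-phase with $S_{N(t)} = s$ and $Y_{N(t)+1} = y > t-s$, the definitions give $A(t) = t-s$ and $B(t) = s+y-t$, so $\{A(t) \leq u\}$ translates to $s \geq t - u$ while $\{B(t) \leq v\}$ translates to $y \leq t - s + v$. The $n = 0$ branch corresponds to $s = 0$ and is compatible with $s \geq t-u$ only when $u = t$, producing the term $[F_Y(t+v)-F_Y(t)]\,\1\{u=t\}$. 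For $n \geq 1$, integrating the conditional probability $F_Y(t+v-s) - F_Y(t-s)$ against $dM(s)$ on $s \in (t-u, t]$ produces the integral term, and summing the three pieces yields the claimed formula.

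The main obstacle is the bookkeeping around $s = 0$: the $N(t) = 0$ branch contributes a singular mass that is active only at $u = t$, which is precisely what the indicator $\1\{u=t\}$ encodes, and one must verify that this boundary term is not double-counted by the integral against $dM$. This is where the absolute continuity hypothesis on $T_1$ is used: it ensures $\PP(S_n = 0) = 0$ for every $n \geq 1$, so the renewal measure $dM$ carries no mass at the origin and the $n = 0$ contribution and the integral are cleanly disjoint.
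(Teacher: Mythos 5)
Your proof is correct and follows essentially the same route as the paper's: partition over the value of $N(t)$, condition on $S_{N(t)}$ using independence of $Y_{N(t)+1}$, isolate the $N(t)=0$ branch to produce the $\1\{u=t\}$ term, and sum the $n\geq 1$ contributions into an integral against $dM$, with $c(t)$ obtained as the complement of the $Y$-phase probability. The only slight inaccuracy is your closing remark: the $n=0$ branch cannot be double-counted by the integral simply because $M=\sum_{n\geq 1}F_{S_n}$ excludes $n=0$ by definition; absolute continuity of $T_1$ is instead what makes the boundary points of the integration range (and the atomlessness of $dM$) immaterial.
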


\begin{proof}[Proof.]
Write $F_n$ for the cumulative distribution function of $S_n$, $n\in \N$.
By partitioning over the number of renewals up to time $t$ and upon
noting that $N(t) = n$ if and only if $S_n \leq t$ and 
$S_n + Y_{n+1} + Z_{n+1} > t$, one obtains that
\begin{eqnarray*}
\PP( A(t) \leq u; B(t) \leq v ) & = & c(t) + 
\PP( t - S_{N(t)} \leq u; \, t < S_{N(t)} + Y_{N(t)+1} \leq t + v ) \\
& = & c(t) + \sum_{n=0}^\infty 
\PP( t - u \leq S_{N(t)}; \, t < S_{N(t)} + Y_{N(t)+1} \leq t + v ; 
\, N(t) = n ) \\
& = & 
c(t) + \PP( t - u \leq S_{N(t)}; \, t < S_{N(t)} + Y_{N(t)+1} \leq t + v; 
\, N(t) = 0 ) \\
& + & \sum_{n=1}^\infty \int_{t-u}^t 
   \PP( t-s < Y_{n+1} \leq t+v-s ) \, dF_n(s).
\end{eqnarray*}
The claim follows by an application of
Fubini's theorem for the last term, the observation that
\[
 \PP( t - u \leq S_{N(t)}; \, t < S_{N(t)} + Y_{N(t)+1} \leq t + v ; 
\, N(t) = 0 ) =
 \PP( t < Y_1 \leq t+v )
\]
if $u=t$ and zero otherwise, and because 
\begin{eqnarray*}
c(t) & = & 1 - \PP( S_{N(t)} + Y_{N(t)+1} > t ) \\
& = & 1 - \PP( Y_1 > t )  - \sum_{n=1}^\infty \int_0^t  
      \PP( Y_{n+1} > t-s ) \, dF_n(s) 
 =  F_Y(t) - \int_0^t \left[ 1 - F_Y(t-s) \right] dM(s).
\end{eqnarray*}
\end{proof}

The long-run behaviour as time goes to infinity can be obtained
by appealing to the key renewal theorem (for example found in
\citet[Theorem~3.4.2]{Ross96}). Specialising to the parameter
vector $I(t)$, the following theorem holds. 

\begin{theorem}
\label{t:nu}
Let $N$ be an alternating renewal process as in (\ref{e:Nt}). 
Assume that $Y_1$ and $T_1$ are absolutely continuous with respect to Lebesgue 
measure with probability density functions $f_Y$ and $f$ and that 
$0 < \EE T_1 < \infty$. Then $(-A(t), A(t)+B(t))$ tends in
distribution to $\nu$, the mixture of an atom at $(0,0)$ and an 
absolutely continuous component that has probability density function
\(
f_Y(l) / \EE Y_1 
\)
on $\{ (a,l) \in \R \times \R^+ : a \leq 0 \leq a + l \}$. The mixture
weights are, respectively, $\EE Z_1 / \EE T_1$ and $\EE Y_1 / \EE T_1$.
\end{theorem}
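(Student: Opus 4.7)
The plan is to read off the finite-$t$ joint law of $(A(t),B(t))$ from Proposition~\ref{p:jointAB}, push $t\to\infty$ by appealing to the key renewal theorem term by term, and finally transform the limit to the $I(t)=(-A(t),A(t)+B(t))$ parametrisation.

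First, for the atomic mass $c(t)$, I would note that $g(s)=1-F_Y(s)$ is non-negative, non-increasing, and has $\int_0^\infty g(s)\,ds=\EE Y_1<\infty$, so it is directly Riemann integrable. The key renewal theorem then gives $\int_0^t[1-F_Y(t-s)]\,dM(s)\to \EE Y_1/\EE T_1$, and since $F_Y(t)\to 1$, we obtain $c(t)\to 1-\EE Y_1/\EE T_1=\EE Z_1/\EE T_1$, which is the required atom weight at $(0,0)$.

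Next, fix $u,v\ge 0$. For $t>u$ the indicator $\1\{u=t\}$ in Proposition~\ref{p:jointAB} drops out, and substituting $x=t-s$ shows that the remaining integral equals $\int_0^t g_{u,v}(t-s)\,dM(s)$ with $g_{u,v}(x)=[F_Y(v+x)-F_Y(x)]\,\1\{0\le x\le u\}$. This $g_{u,v}$ is bounded by $1$ and compactly supported, hence directly Riemann integrable, so the key renewal theorem yields
\[
\PP(A(t)\le u;B(t)\le v)-c(t)\;\longrightarrow\;\frac{1}{\EE T_1}\int_0^u\bigl[F_Y(v+x)-F_Y(x)\bigr]\,dx.
\]
Differentiating in $u$ and then in $v$ identifies the absolutely continuous component of the limiting law of $(A(t),B(t))$ on $(\R^+)^2$ as having density $f_Y(u+v)/\EE T_1$.

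Finally I would push this forward under the bijection $(A,B)\mapsto(-A,A+B)=(a,l)$. The inverse has Jacobian of absolute value $1$, the support $(\R^+)^2$ maps onto $\{(a,l)\in\R\times\R^+:a\le 0\le a+l\}$, and the density becomes $f_Y(l)/\EE T_1$. Integrating out $a\in[-l,0]$ confirms that this absolutely continuous part has total mass $\EE Y_1/\EE T_1$, so writing it as a probability mixture with weight $\EE Y_1/\EE T_1$ produces the normalised density $f_Y(l)/\EE Y_1$ claimed in the theorem, and convergence in distribution of $(A(t),B(t))$ at all continuity points of the limiting CDF is equivalent to convergence in distribution of $I(t)$ to $\nu$. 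The only non-routine step is verifying direct Riemann integrability of $g$ and $g_{u,v}$ so that the key renewal theorem applies, which is immediate from their boundedness together with, respectively, the integrability of $1-F_Y$ and compact support.
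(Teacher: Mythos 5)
Your proposal is correct and follows essentially the same route as the paper: read the finite-$t$ law off Proposition~\ref{p:jointAB}, pass to the limit via the key renewal theorem, and push the limit forward under $(u,v)\mapsto(-u,u+v)$; the only cosmetic difference is that you apply the theorem once to the compactly supported integrand $g_{u,v}$ and then differentiate, whereas the paper splits the integral into monotone integrable tails $1-F_Y(v+\cdot)$ (with a change of variable for the $\int_0^{t-u}$ piece) and recognises the limiting function directly as a mixture distribution function. One small caution: boundedness plus compact support alone does not imply direct Riemann integrability --- you also need Riemann integrability (a.e.\ continuity) on the support, which here holds because $F_Y$ is continuous by the assumed absolute continuity of $Y_1$.
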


\begin{proof}[Proof.]
First, let us consider the limit behaviour of the joint cumulative
distribution function of $A(t)$ and $B(t)$ as $t\to\infty$.
With the notation of Proposition~\ref{p:jointAB}, by Theorem~3.4.4 
in \citep{Ross96}, $c(t)$ converges to $\EE Z_1 / \EE T_1$. Also, 
for $t > u$, the second term in the joint cumulative distribution 
function of $A(t)$ and $B(t)$ is zero. For the last term, note that 
for $v\geq 0$ the function $h_v: \R^+ \to \R$ defined by 
$h_v(s) = 1 - F_Y(v+s)$ is non-negative, monotonically non-increasing 
and integrable. Hence the key renewal theorem implies that 
\[
\lim_{t\to\infty} \int_0^t \left[ 1 - F_Y(t+v-s) \right] dM(s)
= \frac{1}{\EE T_1} \int_0^\infty \left[ 1 - F_Y(v+s) \right] ds.
\]
Analogously, for fixed $u\geq 0$, $t-u \to \infty$ if and only
if $t\to \infty$. Writing $s = t-u$,
\[
\lim_{s\to\infty} \int_0^s \left[ 1 - F_Y(s+u+v-r) \right] dM(r)
= \frac{1}{\EE T_1} \int_0^\infty \left[ 1 - F_Y( u+v+r) \right] dr
\]
\[
= \lim_{t\to\infty} \int_0^{t-u} \left[ 1 - F_Y(t+v-r) \right] dM(r).
\]
We conclude that 
\(
Q(u,v) =
\lim_{t\to\infty} \PP( A(t) \leq u; B(t) \leq v ) 
\)
exists and equals
\[
Q(u,v)
= 
\frac{\EE Z_1}{\EE T_1} + 
\frac{1}{\EE T_1} \int_0^u \left[ F_Y(v+s) - F_Y(s) \right] ds.
\]
Note that $Q$ is the cumulative distribution function of
the mixture of an atom at $(0, 0)$ and an absolutely continuous
component with probability density function
\(
 f_Y(u+v) / \EE Y_1
\)
on $(\R^+)^2$. By Helly's continuity theorem, $(A(t), B(t))$
converges in distribution.
%Grondslagen p.145, 112

Turning to the parametrisation $I(t) = (-A(t), A(t) + B(t))$,
its limit distribution inherits an atom at $(0,0)$ from $Q$
of size $\EE Z_1 / \EE T_1$. By the change of variables bijection
$h: (\R^+)^2 \to \R^- \times \R^+$ defined by $h(u, v) = (-u,
u+v)$, since $h$ is differentiable, the absolutely continuous 
part has probability density function
$f_Y(h^{-1}(a,l))\,|\text{det}\,J_{h^{-1}}(a,l)| / \EE Y_1 = 
f_Y(-a+a+l) / \EE Y_1$, where $J_{h^{-1}}$ is the Jacobian of 
$h^{-1}$. 
\end{proof}

\subsection{Complete model formulation}
\label{sec:model}

We are now ready to formulate a model. Let $\cX$ be an open subset 
of the positive half-line $\R^+$. The state space of $X$, denoted 
by $\cN_\cX$, consists of finite sets $\{ t_1, \dots, t_n \} \subset  \cX$, 
$n\in\N_0$, which we equip with the Borel $\sigma$-algebra of the weak 
topology \citep{DaleVere88}. We will assume that the distribution of $X$ is 
specified in terms of a probability density function $p_X$ with respect to 
the distribution of a unit rate Poisson process on $\cX$ \citep{Lies00}.

Upon labelling the points of $X$ independently with a mark according 
to the mixture distribution of Theorem~\ref{t:nu}, denoted by $\nu$,
the complete model $W$ is obtained. Its realisations are sets
$\{ (t_1, I_1), \dots, (t_n, I_n) \} \subset \cX \times (\R \times
\R^+ )$. For parametrisation $I_j = (a_j, l_j)$, the pair $(t_j, I_j)$
defines an interval $[ t_j + a_j, t_j + a_j + l_j ]$. The ensemble
of all realisations is denoted by $\cN_{\cX \times (\R \times \R^+)}$
and equipped with the Borel $\sigma$-algebra of the weak topology. 
Note that $W$ has probability density function $p_X$ with respect to the 
distribution of a Poisson process on $\cX \times (\R \times \R^+)$ 
with intensity measure $\ell \times \nu$ where $\ell$ is Lebesgue
measure.

Due to the censoring, one does not observe the complete model $W$
but rather the set
\begin{equation}
\label{eq:observeU}
U = \bigcup_{(t,I)\in W} ( (t,0) + I ).
\end{equation}
Our aim is to reconstruct $X$ or $W$ from $U$. In order to do
so, the posterior distribution of $X$ or $W$ given $U$ is needed.
This will be the topic of the next section.

\section{The Bayesian framework}
\label{sec:posterior}

In a Bayesian framework, the posterior distribution updates prior forms 
in the light of data gathered \citep{Game06}. Heuristically,
\begin{align}
p_{X|U}(\mathbf{x}\,|\,\mathbf{u}) 
\propto 
p_{U|X}(\mathbf{u}\,|\,\mathbf{x})\,\,p_{X}(\mathbf{x})
\label{eq:bayes_theorem}
\end{align}
through the use of Bayes' theorem. The term $p_{U|X}(\uu\,|\,\xx)$ 
describes the likelihood that the points of $\xx$ generate the intervals 
in $\uu$. In the literature this term is referred to as a forward term, 
forward density or forward model \citep{Lies95,LiesBadd02}. The term 
$p_X(\xx)$ captures prior beliefs about the geometry of $\xx$.
In our context, since the forward model is a mixture of
discrete and absolutely continuous components,  some care is
required in handling (\ref{eq:bayes_theorem}).

\begin{theorem}
Let $W$ be a point process on the open set $\cX \subset \R$ 
with probability density function $p_X$ with respect to the distribution 
of a unit rate Poisson process on $\cX$ marked independently
with mark distribution $\nu$ defined in Theorem~\ref{t:nu} .
Write $X$ for the ground process of locations in $\cX$ and 
consider the forward model (\ref{eq:observeU}). Let $\uu$
be a realisation of $U$ that consists of an atomic part 
$\{ (a_1, 0), \dots, (a_m, 0) \}$, $m\in\N_0$, and a non-atomic
part $\{ (a_{m+1}, l_{m+1}),\, \dots, (a_n, l_n) \}$, $n \geq m$.
Then the posterior distribution of $X$ given $U = \uu$ satisfies, for 
$A$ in the Borel $\sigma$-algebra of the weak topology on $\cN_\cX$,
%Assume that $X$ is defined on the probability space 
%$(\Omega, \mathcal{E}, \mathbb{P})$ with realisations in $\cN_{\cX}$
\[
\PP( X \in A \mid U = \uu ) = c(\uu) \int_{\cX^{n-m}}
p_X( \{ a_1, \dots, a_m, x_1, \dots, x_{n-m} \}) 
\1_A(\{ a_1, \dots, a_m, x_1, \dots, x_{n-m} \} ) \times
\]
\[
\times \left(
\sum_{ \substack{D_1, \dots, D_{n-m}   \\
       \cup_i \{ D_i \} = \{ 1, \dots, n-m \} } }
\prod_{i=1}^{n-m} \1\{ x_{D_i} \in [a_{m+i}, a_{m+i}+l_{m+i}] \}
\right) \prod_{i=1}^{n-m} dx_i
\]
provided that  $c(\uu)^{-1}$ defined by
\[
\int_{\cX^{n-m}}
p_X( \{ a_1, \dots, a_m, x_1, \dots, x_{n-m} \}) 
\left(
\sum_{ \substack{D_1, \dots, D_{n-m}   \\
       \cup_i \{ D_i \} = \{ 1, \dots, n-m \} } }
\prod_{i=1}^{n-m} \1\{ x_{D_i} \in [a_{m+i}, a_{m+i}+l_{m+i}] \}
\right) \prod_{i=1}^{n-m} dx_i
\]
exists in $(0,\infty)$.
\label{prop:cond_prob_viable}
\end{theorem}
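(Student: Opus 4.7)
The strategy is to apply Bayes' rule $p_{X\mid U}(\xx\mid\uu)\propto p_{U\mid X}(\uu\mid\xx)\,p_X(\xx)$, but because the mark distribution $\nu$ is a mixture of an atom and an absolutely continuous component, the forward density $p_{U\mid X}$ must be constructed with respect to a piecewise reference measure on observation space. I would begin by writing the joint law of $W$ as a Janossy-type integral against the marked Poisson reference with intensity $\ell\times\nu$, using the decomposition
\[
\nu \;=\; \frac{\EE Z_1}{\EE T_1}\,\delta_{(0,0)}
\;+\; \frac{f_Y(l)}{\EE T_1}\,\1\{a\le 0\le a+l\}\,da\,dl,
\]
and then push forward through the map $(t,(a,l))\mapsto(t+a,l)$, which has unit Jacobian in the $(t,a)$-variables at fixed $l$, to obtain the joint law of $(X,U)$.

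Next, I would fix a realisation $\uu$ with $m$ atomic and $n-m$ non-atomic components as in the statement and expand $\prod_{i=1}^{n} d\nu(I_i)$ as a sum over subsets $S\subset\{1,\ldots,n\}$ of indices carrying continuous marks, together with labellings matching marks to observations. For a configuration to map to $\uu$, the $m$ atomic-marked points must be exactly $\{a_1,\ldots,a_m\}$, while each continuous-marked point $t$ must satisfy $t+a=a_{m+j}$ and $l=l_{m+j}$ for some $j$. Substituting $a=a_{m+j}-t$ converts the mark density $f_Y(l_{m+j})/\EE T_1$ into a Lebesgue factor in $t$, and the support constraint $a\le 0\le a+l$ becomes $a_{m+j}\le t\le a_{m+j}+l_{m+j}$, producing the indicator appearing in the theorem.

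Collecting terms gives a joint density for $(X,U)$ with respect to the natural product reference measure on $\uu$-space, namely $da_i\otimes\delta_0(dl_i)$ for each atomic observation and $da_i\otimes dl_i$ for each non-atomic one. The $1/n!$ from the Janossy formula, the $\binom{n}{m}$ from the choice of $S$, and the $m!$ and $(n-m)!$ from the labellings of atomic and continuous marks cancel against the symmetry of $p_X$, leaving a single sum over bijections $D$ from non-atomic observations to latent points $\{x_1,\ldots,x_{n-m}\}$, which is precisely the sum with $\bigcup_i\{D_i\}=\{1,\ldots,n-m\}$ in the statement. The prefactor $(\EE Z_1/\EE T_1)^m\prod_{i=1}^{n-m}f_Y(l_{m+i})/\EE T_1$ depends only on $\uu$ and therefore cancels when I normalise by the marginal of $U$ at $\uu$, yielding the stated formula with $c(\uu)^{-1}$ equal to the normalising integral, which is finite and strictly positive by assumption.

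The principal obstacle is the disintegration: defining the conditional distribution rigorously when the observation space carries a mixed reference measure requires selecting a regular version and taking care with the decomposition of $\nu$ into atomic and continuous parts. A secondary combinatorial difficulty is tracking that the sums over $S$ and over the labellings of atomic and continuous marks collapse correctly under the symmetry of $p_X$ into the single sum indexed by $D$, without producing spurious factorial factors; honest bookkeeping at this step is what delivers the clean expression in the statement.
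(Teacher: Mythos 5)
Your strategy is sound and reaches the right formula, but it is a genuinely different route from the paper's. You construct the joint law of $(X,U)$ explicitly: expand the marked Poisson reference over $n$ and over which points carry the atom $\delta_{(0,0)}$ versus the absolutely continuous part of $\nu$, push the mark through $(t,(a,l))\mapsto(t+a,l)$ so that the support condition becomes the indicator $\1\{t\in[a_{m+j},a_{m+j}+l_{m+j}]\}$, and then read off the posterior by Bayes with respect to a mixed (Lebesgue $\otimes$ Dirac) reference measure on the observation side, the $\uu$-dependent prefactor $(\EE Z_1/\EE T_1)^m\prod_i f_Y(l_{m+i})/\EE T_1$ cancelling in the quotient. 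The paper instead never introduces a reference measure on the observation space: it takes the claimed formula and verifies the defining identity of conditional probability, $\EE[\1_F(U)\,\PP(X\in A\mid U)]=\EE[\1_F(U)\,\1_A(X)]$ for all Borel $F$ and $A$, by expanding both sides over $n$ and the atom-carrying subset $C_0$ with the mark density $q_x(a,l)=f_Y(l)\1\{a\le x\le a+l\}/\EE Y_1$ and cancelling the inner integral against $c(\uu)$ via Fubini and the symmetry of $p_X$. The underlying computation (Poisson expansion, mark change of variables, factorial bookkeeping) is the same in both; what differs is the logical wrapper. Your route buys a constructive likelihood interpretation of the data (indeed it is close to how the paper later extracts the likelihood (\ref{eq:estimateForward}) from its own expansion), but the step you flag as the ``principal obstacle''---justifying that dividing a joint density by the marginal with respect to a mixed, configuration-space reference measure yields a regular version of the conditional distribution---is not a side issue: it is exactly the content the paper's verification supplies. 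To finish your argument you would either have to exhibit the $\sigma$-finite dominating measure on $\cN_{\R\times\R^+}$ sector by sector in $(n,m)$ and invoke a disintegration theorem, or check the conditional-expectation identity directly, at which point you have essentially reproduced the paper's proof. So treat your sketch as correct in outline, with that final measure-theoretic step still owed, and with the factorial cancellation (the $1/n!$, $\binom{n}{m}$, and the labelling counts collapsing to the single sum over assignments $D_1,\dots,D_{n-m}$) deserving to be written out rather than asserted.
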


\begin{proof}[Proof.]
We must show that for each $A$ in the Borel $\sigma$-algebra of
$\cN_\cX$ with respect to the weak topology and each $F$ in the 
Borel $\sigma$-algebra of the weak topology on $\cN_{\R\times\R^+}$
the following identity holds: 
\begin{equation}
\EE \left[ \1_F(U) \, \PP( X \in A \mid U ) \right] = 
\EE \left[ \1_F(U) \, \1_A(X) \right].
\label{eq:cond-expectation}
\end{equation}

Let
\begin{align*}
    q_x(a,l) = \frac{f_Y(l)}{\EE Y_1}\,\1\{a \leq x \leq a+l\}
\end{align*}
describe a probability density function for parametrisations of 
intervals generated 
by $x\in\cX$, noting that it is jointly measurable
as a function on $\cX \times (\R \times \R^+)$. Then, denoting the
cardinality of a set by $|\cdot|$ and Lebesgue measure by $\ell$,
$\EE [ \1_F(U) \, \1_A(X)]$ can be expanded as 
\[
\sum_{n=0}^{\infty}\frac{e^{-\ell(\cX)}}{n!} 
   \sum_{C_0 \subset\{1, ..., n\}} 
   \left( \frac{\EE Z_1}{\EE T_1} \right)^{|C_0|}
   \left( \frac{\EE Y_1}{\EE T_1}  \right)^{n-|C_0|}
   \int_{\cX^n} \1_A( \{ x_1, \dots, x_n \} ) \,
   p_X( \{ x_1, ..., x_n \} ) 
   \sum_{\substack{ C_1, ..., C_{n-|C_0|} \\ 
   \cup_j \{ C_j \} = \{1, ..., n\} \setminus C_0}} \, 
\]
\begin{equation}
   \frac{1}{(n - |C_0|)!} \left(
   \int_{(\R\times \R^+)^{n-|C_0|}}
   \prod_{j=1}^{n-|C_0|} q_{x_{C_j}}(u_j) \,
   \1_F( \{u_1, \dots, u_{n-|C_0|} \} \cup 
     \{ (x_k,0) : k \in C_0 \}  )
   \prod_{j=1}^{n-|C_0|} \,du_j  \right)
   \prod_{i=1}^{n}\,dx_i. 
\label{eq:final_rhs}
\end{equation}
For the left-hand side of (\ref{eq:cond-expectation}), expanding as before, we get
\[
%\EE[1_F(U) \PP(X\in A|U)] &= 
\sum_{n=0}^{\infty}\frac{e^{-\ell(\cX)}}{n!} 
   \sum_{C_0 \subset\{1, ..., n\}} 
   \left( \frac{\EE Z_1}{\EE T_1} \right)^{|C_0|}
   \left( \frac{\EE Y_1}{\EE T_1} \right)^{n-|C_0|}
   \int_{\cX^n} p_X( \{ x_1, ..., x_n \} ) 
  \sum_{\substack{ C_1, ..., C_{n-|C_0|} \\ 
   \cup_j \{ C_j \} = \{1, ..., n\} \setminus C_0}} \, \frac{1}{(n - |C_0|)!}
\]
\[
\left(
   \int_{(\R\times \R^+)^{n-|C_0|}}
   \prod_{j=1}^{n-|C_0|} q_{x_{C_j}}(u_j) \,
   \PP\left(X\in A \mid U = \{ u_1, \dots, u_{n-|C_0|} \} \cup 
      \{ (x_k,0): k \in C_0 \} \right)  \times
\right.
\]
\[
\times \left. 
  \1_F(  \{u_1, \dots, u_{n-|C_0|} \} \cup
     \{ (x_k, 0): k \in C_0 \} )
   \prod_{j=1}^{n-|C_0|} \,du_j \right) \prod_{i=1}^{n}\,dx_i.  
\]
Plugging in the claimed expression for the conditional expectation, one obtains
\[
%\EE[1_F(U) \PP(X\in A|U)] &= 
\sum_{n=0}^{\infty}\frac{e^{-\ell(\cX)}}{n!} 
   \sum_{C_0 \subset\{1, ..., n\}} 
  \left( \frac{\EE Z_1}{\EE T_1} \right)^{|C_0|}
   \left( \frac{\EE Y_1}{\EE T_1} \right)^{n-|C_0|}  
   \int_{\cX^n} p_X( \{ x_1, ..., x_n \} ) 
 \sum_{\substack{ C_1, ..., C_{n-|C_0|} \\ 
   \cup_j \{ C_j \} = \{1, ..., n\} \setminus C_0}} \, \frac{1}{(n - |C_0|)!}
\]
\[
   \int_{(\R\times \R^+)^{n-|C_0|}}
   \prod_{j=1}^{n-|C_0|} q_{x_{C_j}}(u_j) \,  c( \{u_1, \dots, u_{n-|C_0|} \} \cup
     \{ (x_k, 0) : k \in C_0 \} ) 
\]
\[
\left(
   \int_{\cX^{n-|C_0|}} 
     p_X( \{ y_1, \dots, y_{ n-|C_0|} \} \cup \{ x_k: k \in C_0 \} ) \,
\1_A( \{ y_1, \dots, y_{n-|C_0|} \} \cup \{ x_k: k \in C_0 \} ) 
\sum_{\substack{ D_1, ..., D_{n-|C_0|} \\ 
   \cup_j \{ D_j \} = \{1, ..., n\} \setminus C_0}} 
   \right.
   \]
   \[
   \left.
   \prod_{k=1}^{n-|C_0|} \1\{ y_{D_k} \in [ u_{k,1} , u_{k,1} + u_{k,2} ] \} \, dy_k   
\right)
 \1_F( \{u_1, \dots, u_{n-|C_0|} \} \cup
     \{ (x_k, 0) : k \in C_0 \} ) \,   
    \prod_{j=1}^{n-|C_0|} \,du_j \prod_{i=1}^n dx_i.
\]
Note that in order to cancel terms, the order of integration must be changed. 
As evidently the first term in $q_x(a,l)$, that is
${f_Y(l)}  / {\EE Y_1}$, does not depend on $x$, by Fubini's theorem,
\begin{align*}
    \EE[1_F(U) \, \PP(X \in A \mid U)] &= 
    \sum_{n=0}^{\infty} \frac{e^{-\ell(\cX)}}{n!} 
    \sum_{C_0 \subset\{1, ..., n\}}
    \left(\frac{\EE Z_1}{\EE T_1}\right)^{|C_0|} 
    \left(\frac{\EE Y_1}{\EE T_1}\right)^{n-|C_0|}  \\
\end{align*}
\[
  \int_{\cX^n} 
    p_X( \{ y_1, \dots, y_{n-|C_0|} \} \cup \{ x_k: k \in C_0 \} ) \,
    \1_A( \{ y_1, \dots, y_{n-|C_0|} \} \cup \{ x_k: k \in C_0 \} ) 
\]
\[
  \sum_{\substack{ D_1, ..., D_{n-|C_0|} \\ 
   \cup_j \{ D_j \} = \{1, ..., n\} \setminus C_0}} \frac{1}{(n-|C_0|)!}
   \int_{(\R\times \R^+)^{n-|C_0|}}
 \1_F( \{u_1, \dots, u_{n-|C_0|} \} \cup
     \{ (x_k, 0) : k \in C_0 \} )      \prod_{j=1}^{n-|C_0|} q_{y_{D_j}}(u_j)  \,
\]
\[
 \left(
   \int_{\cX^{n-|C_0|}} 
     p_X( \{ x_1, \dots, x_n \} )
    \sum_{\substack{ C_1, ..., C_{n-|C_0|} \\ 
    \cup_j \{ C_j \} = \{1, ..., n\} \setminus C_0}} 
  \prod_{i=1}^{n-|C_0|} \1\{ x_{C_i} \in [ u_{i,1} , u_{i,1} + u_{i,2} ] \} \, dx_{C_i}
  \right) \times
\]
\[
   \times c( \{u_1, \dots, u_{n-|C_0|} \} \cup
      \{ (x_k, 0) : k \in C_0 \}  ) 
 \prod_{j=1}^{n-|C_0|} \,du_j \prod_{k=1}^{n-|C_0|}\,dy_k \prod_{i\in C_0} dx_i  
    = \EE[ \1_F(U) \, \1_A(X)] 
\]
after cancelling and rearranging terms and noting that the term in between
brackets cancels out against the normalisation constant 
$c( \{u_1, \dots, u_{n-|C_0|} \} \cup \{ (x_k,0) : k \in C_0 \})$.
\end{proof}

Theorem~\ref{prop:cond_prob_viable} states that the posterior 
distribution of $X$ given $U = \uu$ is the union of $m$ atoms
combined with $n-m$ points that are distributed on $\cX^{n-m}$
according to the symmetric probability density function
\begin{equation}
\label{eq:jn}
c(\uu) \, p_X( \{ a_1, \dots, a_m, x_1, \dots, x_{n-m} \}) 
\sum_{ \substack{D_1, \dots, D_{n-m}   \\
       \cup_i \{ D_i \} = \{ 1, \dots, n-m \} } }
\prod_{i=1}^{n-m} \1\{ x_{D_i} \in [a_{m+i}, a_{m+i} + l_{m+i}] \}
\end{equation}
with respect to Lebesgue measure. 

\begin{corollary}
In the framework of Theorem~\ref{prop:cond_prob_viable},
the conditional distribution of the mark assignments $D_1, 
\dots, D_{n-m}$ for non-atomic marks is as follows.
For $d_1, \dots d_{n-m} \in \{ 1, \dots, n-m \}$ such that
$\{ d_1, \dots, d_{n-m} \}$ $ = \{ 1, \dots, n-m \}$,
\[
\PP(D_1 = d_1, \dots, D_{n-m} = d_{n-m} | X = 
\{ a_1, \dots, a_m, x_1, \dots, x_{n-m} \}, U = \uu )
\]
\[
= \frac{
\prod_{i=1}^{n-m} \1 \{ x_{d_{i}} \in 
   [ a_{m+i}, a_{m+i}+l_{m+i}] \}
}{
\sum_{ \substack{C_1, \dots, C_{n-m}   \\
       \cup_i \{ C_i \} = \{ 1, \dots, n-m \} } }
\prod_{i=1}^{n-m} \1 \{ x_{C_{i}} \in 
   [ a_{m+i}, a_{m+i}+l_{m+i}] \}
}
\]
provided that $x_i \in [a_{m+i}, a_{m+i} + l_{m+i}]$ for
$i=1, \dots, n-m$ and zero otherwise.
\end{corollary}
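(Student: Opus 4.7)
The plan is to reuse the joint expansion developed in the proof of Theorem~\ref{prop:cond_prob_viable}, but this time retain the assignment tuple $(D_1,\dots,D_{n-m})$ as an explicit random label rather than marginalising it out. Concretely, I would introduce, on the same probability space as $W$ and $U$, the random map $D$ that records, for each non-atomic interval in $U$, the index of the latent point of $X$ that generated it. The joint distribution of $(X,D,U)$ then admits the same expansion as the one used in the proof of Theorem~\ref{prop:cond_prob_viable}, except that the sum over mark assignments $C_1,\dots,C_{n-|C_0|}$ is replaced by picking out the single summand corresponding to a specified value $d=(d_1,\dots,d_{n-m})$.

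Next, I would use the characterisation of conditional probability together with Bayes' rule in its density form to write
\[
\PP\bigl(D=d \mid X = \{a_1,\dots,a_m, x_1,\dots,x_{n-m}\},\, U=\uu\bigr)
= \frac{j_n(X,d,\uu)}{\sum_{d'} j_n(X,d',\uu)},
\]
where $j_n$ denotes the joint density of $(X,D,U)$ at the relevant configuration, normalised by the marginal density of $(X,U)$ furnished by Theorem~\ref{prop:cond_prob_viable}. The factor $p_X$ and the combinatorial prefactors do not depend on $d$ and cancel.

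The one computational observation that carries the argument is the factorisation
\[
q_{x_{d_i}}(a_{m+i},l_{m+i})
= \frac{f_Y(l_{m+i})}{\EE Y_1}\,\1\{x_{d_i}\in [a_{m+i},\,a_{m+i}+l_{m+i}]\},
\]
so that $\prod_i q_{x_{d_i}}(u_{m+i})$ splits into an $f_Y$--product that is independent of $d$ and a product of indicators that depends on $d$ only through the membership relations $x_{d_i}\in[a_{m+i},a_{m+i}+l_{m+i}]$. The $f_Y$--product therefore cancels between numerator and denominator, and what remains is precisely the ratio in the statement. The provision $x_i\in[a_{m+i},a_{m+i}+l_{m+i}]$ for every $i$ guarantees that the denominator is strictly positive, so the expression is well defined; if this condition fails for some $i$ then the denominator vanishes and the corollary's ``zero otherwise'' branch applies trivially.

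The only real subtlety I anticipate is notational: the proof of Theorem~\ref{prop:cond_prob_viable} uses $(u_1,\dots,u_{n-|C_0|})$ as integration variables while the corollary fixes $\uu$, so I would need to match indices carefully so that $u_i=(a_{m+i},l_{m+i})$ and so that the summation variables $C_1,\dots,C_{n-m}$ of the theorem correspond exactly to the values of $(D_1,\dots,D_{n-m})$ considered here. Once this bookkeeping is settled, everything else is cancellation and the corollary follows.
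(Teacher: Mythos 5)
Your argument is correct and is essentially the derivation the paper intends: the corollary is stated without a separate proof, as an immediate consequence of the expansion in the proof of Theorem~\ref{prop:cond_prob_viable} (equivalently of (\ref{eq:jn})), and your route --- keeping the assignment $D$ explicit, applying Bayes' rule in density form, and cancelling the $d$-independent factors $p_X$ and $\prod_i f_Y(l_{m+i})/\EE Y_1$ --- is exactly that. One small correction: your closing remark that failure of the proviso $x_i\in[a_{m+i},a_{m+i}+l_{m+i}]$ for some $i$ forces the denominator to vanish is not true in general (a non-identity assignment may still satisfy all the indicators); the proviso is a labelling convention ensuring positivity of the denominator via the identity assignment, and the ``zero otherwise'' branch is part of the statement rather than something that needs to be derived from a vanishing denominator.
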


As a special case, let us consider an inhomogeneous Poisson process
with integrable intensity function $\lambda: \cX \to \R^+$. Then, under
the posterior distribution, $X$ consists of $n$ independent points, one
in each interval of $\uu$, with probability density function
\[
\frac{\lambda(x)}{\int_{[a_i, a_i+l_i] \cap \cX} 
\lambda(s) ds}
\]
on $[a_i, a_i + l_i] \cap \cX$ for intervals with $l_i>0$.
To see this, recall that for a Poisson process \citep{Lies00}
\[
p_X( \{ a_1, \dots, a_m, x_1, \dots, x_{n-m} \}) =
\exp\left[ \int_{\cX} (1 - \lambda(s) ) ds \right] 
\prod_{j=1}^m \lambda(a_j) \prod_{i=1}^{n-m} \lambda(x_{i})
\]
which factorises over terms associated with each interval. Hence 
(\ref{eq:jn}) is proportional to
\[
\sum_{ \substack{D_1, \dots, D_{n-m}   \\
       \cup_i \{ D_i \} = \{ 1, \dots, n-m \} } }
\prod_{i=1}^{n-m} \lambda(x_{D_i}) \,
   \1 \{ x_{D_i} \in [a_{m+i}, a_{m+i} + l_{m+i}] \}.
\]

\section{Statistical inference}
\label{sec:mcmc}

In this section we will consider statistical inference for
aoristically censored data. Our main aim is to reconstruct
the latent point process $X$ from observed parametrised 
intervals $U$, that may or may not be censored. In tandem, the
censoring probability as well as the parameters $\eta$ of the 
distribution of the non-degenerate intervals must be estimated.
Parameters of the prior distribution may either be treated as fixed or
subject to estimation.

%Suppose that we observe a realisation 
%\(
%\uu = \{ (a_1, 0), \dots, (a_m, 0), 
 %  (a_{m+1}, l_{m+1}), \dots, (a_m, l_m) \}
%\)
%of $U$, where $a_i \in \R$ and $l_i > 0$,
%Then, by taking  $A$ equal to  $\cN_\cX$,  equation (\ref{eq:final_rhs})
%implies that the likelihood at $\uu$ reads
%\[
%e^{-\ell(\cX)} p^m 
%(1-p)^{n-m} 
%\int_{\cX^{n-m}}
% p_X( \{ a_1, \dots, a_m, x_1, \dots, x_{n-m} \}; \theta ) \,
 %\times
 %\]
 %\begin{equation}
 %\times
%\left(
%\sum_{ \substack{D_1, \dots, D_{n-m}   \\
 %      \cup_i \{ D_i \} = \{ 1, \dots, n-m \} } } 
%       \1\{ x_{D_i} \in [a_{m+i}, a_{m+i}+l_{m+i}] \} 
%\right) 
%\prod_{i=1}^{n-m} \frac{ f_Y(l_i; \eta) }{ \EE [Y_1; \eta]} \, dx_i.
%\label{eq:likelihood}
%\end{equation}
%The dependence of the expectation on \eta makes this not amenable'
%Use length-weighting instead. Verified for Gamma distribution. 

\subsection{Forward model parameters}
\label{sec:forward}

Suppose that we observe a realisation 
\(
\uu = \{ (a_1, 0), \dots, (a_m, 0), 
 (a_{m+1}, l_{m+1}), \dots, (a_n, l_n) \}
\)
of $U$, where $a_i \in \R$, $l_i > 0$ and $n\neq 0$.
Our first aim is to estimate the parameters $\eta$ of the mark distribution $\nu$
(cf.\ Theorem~\ref{t:nu}). The parameter vector $\eta$ comprises the 
parameters $\zeta$ of the probability density function $f_Y$ as well as any other
parameters $\theta$ involved in the joint distribution of the random vector
$C_1 = (Y_1, Z_1)$ that defines the alternating renewal process (cf.\
Section~\ref{sec:censoring}). 

The likelihood function can be obtained from the proof of 
Theorem~\ref{prop:cond_prob_viable} by taking  $A$ equal to 
$\cN_\cX$ in equation (\ref{eq:final_rhs}). On a logarithmic scale,
% $e^{-|W|} p_X$ 
\begin{equation}
\label{eq:estimateForward}
L(\eta; \uu) = m \log \left(
\frac{\EE [Z_1; \zeta, \theta]}{\EE [T_1;\zeta,\theta]} \right)
+
(n-m) \log\left(
\frac{\EE [Y_1;\zeta]}{\EE T_1[\zeta,\theta]} \right)
+
\sum_{i=1}^{n-m} \log \left( \frac{ f_Y(l_i;\zeta) }{ \EE [Y_1;\zeta]}  \right)
\end{equation}
upon ignoring terms that do not depend on $\eta$. 

Equation (\ref{eq:estimateForward}) simplifies greatly if we assume that
the mixture weight $p = \EE[Z_1; \zeta, \theta] / \EE[T_1; \zeta, \theta]$ does
not depend on $\zeta$. Then $\eta = (p, \zeta)$ and
\[
L(p, \zeta; \uu) = m \log p + (n-m) \log(1-p) + 
\sum_{i=1}^{n-m} \log \left( \frac{ f_Y(l_i;\zeta) }{ \EE [Y_1;\zeta]}  \right).
\]
The atom probability $p$ may be estimated by $m/n$,  the fraction
of atoms in the sample $\uu$. For $\zeta$, we need the following result.

\begin{proposition}
Let  $\nu$ be as in Theorem~\ref{t:nu}. Then the distribution of 
the lengths of non-degenerate intervals is given by the
length-weighted marginal distribution 
$f(l) = {l f_Y(l)} / {\EE Y_1}$ and the left-most points 
are, conditionally on $L = l$, uniformly distributed on $[-l, 0]$.
\end{proposition}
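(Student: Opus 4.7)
The plan is to read off both statements directly from the absolutely continuous component of the mixture $\nu$ given in Theorem~\ref{t:nu}. By definition, a non-degenerate interval is one whose parametrisation $(a,l) = (-A(\infty), A(\infty)+B(\infty))$ lies outside the atom at $(0,0)$, so conditioning on non-degeneracy amounts to conditioning on the absolutely continuous component. That component has joint probability density
\[
g(a,l) = \frac{f_Y(l)}{\EE Y_1}, \qquad (a,l) \in \R \times \R^+ \text{ with } a \leq 0 \leq a+l,
\]
with respect to Lebesgue measure.

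First, I would compute the marginal density of the length $L$ by integrating $g$ over all admissible left-most points. The support constraint $a \leq 0 \leq a+l$ for a fixed $l>0$ is equivalent to $a \in [-l, 0]$, so
\[
f_L(l) = \int_{-l}^{0} \frac{f_Y(l)}{\EE Y_1}\, da = \frac{l\, f_Y(l)}{\EE Y_1},
\]
which is precisely the claimed length-weighted marginal.

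Next, I would read off the conditional density of the left-most point $A$ given $L=l$ as the ratio of the joint density to the marginal,
\[
f_{A \mid L}(a \mid l) = \frac{g(a,l)}{f_L(l)} = \frac{f_Y(l)/\EE Y_1}{l f_Y(l)/\EE Y_1} = \frac{1}{l}, \qquad a \in [-l, 0],
\]
which is the uniform density on $[-l, 0]$.

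There is no real obstacle here beyond correctly identifying the support region $\{a \leq 0 \leq a+l\}$ as the strip $a \in [-l, 0]$ for each fixed $l>0$; once this is in place both claims are one-line computations from the density supplied by Theorem~\ref{t:nu}. The only minor point worth flagging in the write-up is that the mixture weight $\EE Y_1 / \EE T_1$ cancels when one conditions on the non-degenerate (absolutely continuous) component, which is why it does not appear in the final expressions.
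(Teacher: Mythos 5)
Your proof is correct and follows essentially the same route as the paper: integrate the density $f_Y(l)/\EE Y_1$ over $a \in [-l,0]$ to get the length-weighted marginal, then take the ratio of joint to marginal to obtain the uniform conditional on $[-l,0]$. Your remark that the mixture weight cancels upon conditioning on the non-degenerate component is a sensible extra clarification but does not change the argument.
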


\begin{proof}[Proof.]
Let $f(l)$ be the marginal distribution of the length $l$. Evaluating,
\begin{align*}
    f(l) &= \int \frac{f_Y(l)}{\EE Y_1}\,\1\{a \leq 0 \leq a+l\}\,da = 
    \int_{-l}^0 \frac{f_Y(l)}{\EE Y_1}\,da =  \frac{lf_Y(l)}{\EE Y_1}.
\end{align*}
Let $f_{A|L=l}(a)$ be the conditional probability density function for the left-most
point $A$ of an interval given its length $L$. Using the definition of conditional density,
\begin{align*}
    f_{A\,|L=l}(a) = \frac{f(a,l)}{f(l)} = \frac{
    \frac{ f_Y(l) }{ \EE Y_1 } \, \1\{a \leq 0 \leq a+l \} }{
    \frac{l f_Y(l)}{\EE Y_1 } }
      = \frac{1}{l}\,\1\{a \in [-l,0]\}.
\end{align*}
Thus $A \sim \text{Unif}[-l, 0]$.
\end{proof}

When the censoring probability does not depend on $\zeta$, the latter may be 
estimated by treating the non-degenerate intervals as an independent sample 
from $f(l)$ and applying the maximum likelihood method. For example, if $f_Y$ 
is the probability density function of a $\text{Gamma}(k, \lambda)$ distribution with shape 
parameter $k > 0$ and rate parameter $\lambda > 0$, $f(l)$ is the probability 
density of a Gamma distribution with parameters $k+1$ and $\lambda$.

\subsection{State estimation}
\label{sec:state}

Since the posterior distribution of $X$ or $W$ given $U$ (cf.\
Theorem~\ref{prop:cond_prob_viable}) is intractable because of the 
normalisation constant $c(\uu)$, we will use Markov chain Monte Carlo methods 
\citep{Brooetal11,MollWaag03} for simulation. These methods construct a Markov 
chain in such a way that the stationary distribution of the chain is exactly the posterior 
distribution. Of these methods, a Metropolis-Hastings algorithm with a fixed 
number of points will be used. Since the transition probabilities depend on 
likelihood ratios, the benefit is that one can sample from unnormalised densities.

Let us return to the framework of Theorem~\ref{prop:cond_prob_viable}. Note that 
sampling from the posterior distribution of $X$ given $U$ is cumbersome due to the 
presence of the permutation sum term in (\ref{eq:jn}).  Therefore our approach is 
to sample from the posterior distribution of the complete model $W$ and project on 
its ground process of locations. Doing so avoids attributing points to intervals and 
therefore avoids the intractable sum. Moreover, as we saw in 
Section~\ref{sec:model}, $W$ has probability density function $p_X$ with respect 
to a unit rate Poisson process on $\cX \times (\R\times\R^+)$ with intensity
measure $\ell \times \nu$.  Upon observing $U = \uu$ for
$\uu = \{ (a_1, 0), \dots, (a_m, 0), (a_{m+1}, l_{m+1}), \dots, (a_n, l_n) \}$, by
 (\ref{eq:jn}) this means that we must sample from a probability density function $\pi$ 
 on $\cX^{n-m}$ that is proportional to $p_X( \{ a_1, \dots, a_m, x_1, \dots, x_{n-m} \})$.
The ordering of the points inherent in working on $\cX^{n-m}$ represents
the unique correspondence between points in $X$ and intervals in $U$ in the 
complete model. We will use the notation $\bar \xx$ to indicate that we look at 
vectors rather than sets $\xx$. In the special case that $n=m$, all points are 
observed perfectly and there is no need for any simulation.

We will use the Metropolis--Hastings algorithm \citep{Brooetal11} when $n>m$, i.e.\
when there are density-admitting points.
%The state space of Algorithm~\ref{algo:MH} is denoted by $E(\uu)$ and given by
%\begin{align*}
%    E(\uu) = \{ (w_1, \dots, w_n) = ( (x_i, (a_i-x_i, l_i) ) )_{i=1}^{n} :
%    x_i \in \cX \cap [a_i, a_i + l_i], p_X(\{x_i\}_{i=1}^{n}) > 0 \} 
%\end{align*}
%for a given realisation $\uu = \{ (a_1, l_1), \dots, (a_n, l_n) \}$.
%Note that since $\cX$ is open, the intersection with closed intervals that 
%contain a point in $\cX$ is non-degenerate too when $l_i > 0$. Each component 
%is of the form $(x, g(x), l)$ and therefore Borel measurable because the graph of 
%the measurable function $g(x) = a-x$ is measurable in $\cX\times\R\times\R^+$.
%Focussing on the first components only and ignoring the fixed atoms gives us 
The state space is given by
\begin{align*}
    \overline E(\uu) = \{ (x_1, \dots, x_{n-m}) \in \cX^{n-m}: 
    x_i \in \cX \cap [a_{m+i}, a_{m+i} + l_{m+i}], \,
    p_X( \{ a_1, \dots, a_m, x_1, \dots, x_{n-m} \} ) > 0 \}.
\end{align*}
From now on, we shall assume that the state space is non-degenerate in the
sense that
\begin{equation}
\label{eq:data_viable}
\int_{\overline E(\uu)} p_X( \{ a_1, \dots, a_m, x_1, \dots, x_{n-m} \} ) \, 
dx_1 \dots dx_{n-m} > 0.
\end{equation}
%may be zero for example for a hard core prior and $\{ a_1, \dots a_m \})$ violating

Now, the Metropolis--Hastings algorithm is defined as follows.
Let $q: \overline E(\uu) \times \overline E(\uu) \to \R^+$ be a Markov kernel.
Iteratively, if the current state is $\bar \xx \in \overline E(\uu)$,
propose a new state $\bar \yy \in \overline E(\uu)$ according to the probability 
density function $q(\bar \xx, \cdot)$ and accept the proposal to move to $\bar \yy$ with probability
\begin{equation}
\alpha( \bar \xx, \bar \yy ) = \left\{ \begin{array}{ll}
1 &  \mbox{ if } p_X( \{ a_1, \dots, a_m, y_1, \dots, y_{n-m} \} )\, q(\bar \yy, \bar \xx) \\
  & \quad \quad \geq p_X( \{ a_1, \dots, a_m, x_1, \dots, x_{n-m} \} )\,q(\bar \xx, \bar \yy);\\
 \frac{ p_X( \{ a_1, \dots, a_m, y_1, \dots, y_{n-m} \} ) \, q(\bar \yy, \bar \xx) }{ 
        p_X( \{ a_1, \dots, a_m, x_1, \dots, x_{n-m} \} ) \, q(\bar \xx, \bar \yy)\}}
 & \mbox{ otherwise.}
\end{array}
\right.
\label{eq:MH-general}
\end{equation}
When the proposal is rejected, stay in the current state $\bar \xx$. The choice
of $q$ depends on $p_X$. In our simulations in Section~\ref{sec:sim}, we will use the 
following algorithm which is valid when the prior density function $p_X$ is strictly positive. 

\begin{algorithm}
\label{algo:MH}
Supppose that $p_X > 0$ and $n>m$.  Iteratively, if the current state is 
$\bar \xx \in \overline E(\uu)$,
\begin{itemize}
\item pick an interval $[a_{m+i}, a_{m+i} + l_{m+i}]$, $i=1, \dots, n-m$, uniformly
at random from the non-degenerate ones;
\item generate a uniformly randomly distributed point $y_i$ on 
$\cX \cap [a_{m+i}, a_{m+i} + l_{m+i}]$ and propose to update $x_i$ to $y_i$;
%and adjust the parametrisation of the associated interval to $(a_{m+i}-y_i, l_{m+i})$;
\item accept the proposal with probability
\begin{equation}
\alpha_i( (x_1, \dots, x_{n-m}), y_i ) = \min \left( 1, 
 \frac{ p_X( (\{ a_1, \dots, a_m, x_1, \dots, x_{n-m} \} 
   \setminus \{ x_i \} ) \cup \{ y_i \}) 
}{ p_X( \{ a_1, \dots, a_m, x_1, \dots, x_{n-m} \}) }
\right)
    \label{eq:hastings_ratio}
\end{equation}
and otherwise stay in the current state.
\end{itemize}
\end{algorithm}

A few remarks are in order. First, note that since $\cX$ is open, the intersection with 
closed intervals that contain a point in $\cX$ is also non-degenerate when $l_i > 0$.  
Secondly, when $p_X$ may take the value zero, the proposal mechanism in Algorithm~\ref{algo:MH} 
might result in a new state that does not belong to $\overline E(\uu)$, even when $\bar \xx$ does. 
Moreover, only changing one component at a time might lead to non-irreducible Markov chains. 
For example, if $\uu$ contains the parametrisations of the intervals $[0,1]$ and $[ 0.1, 1]$ 
and $p_X(\xx) = 0$ for realisations $\xx$ that contain components separated by a distance 
less than $0.55$, then states such as $\bar \xx = (0.3, 0.9)$ and $\bar \yy = (0.9, 0.3)$ 
cannot be reached from one another.

Let the target distribution be $\pi$ given by (\ref{eq:jn}) interpreted as a probability
density on $\overline E(\uu)$.
In the next propositions, basic properties of the algorithm are considered. The proofs 
are modifications to our context of classic Metropolis-Hastings proofs found in, for example, \citep{MengTwee96},
\citep{RobeSmit94} or \citep[Chapter~7]{MollWaag03}.

We will write $Y_i$ for subsequent states and denote by $P(\bar \xx, F) = 
P(Y_{i+1} \in F\,|\,Y_i = \bar \xx)$ the transition probability
from state $\bar \xx \in \overline E(\uu)$ into $F \subset \overline E(\uu)$. 

\begin{proposition}
Consider the set-up of Theorem~\ref{prop:cond_prob_viable} with $n>m$ and assume that 
condition (\ref{eq:data_viable}) is met. Then, the Metropolis-Hastings algorithm 
defined by Markov kernel $q$ on $\overline E(\uu)$ and acceptance probabilities 
(\ref{eq:MH-general}) is reversible with respect to $\pi$.

\label{prop:rev}
\end{proposition}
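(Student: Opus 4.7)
The plan is to establish detailed balance in integrated form: for all Borel sets $F_1, F_2 \subseteq \overline E(\uu)$,
\[
\int_{F_1} \pi(\bar\xx)\,P(\bar\xx, F_2)\,d\bar\xx = \int_{F_2} \pi(\bar\yy)\,P(\bar\yy, F_1)\,d\bar\yy,
\]
which is equivalent to reversibility with respect to $\pi$. The first step is to decompose the transition kernel into an accepted-move part and a stay-in-place part: for measurable $F \subseteq \overline E(\uu)$,
\[
P(\bar\xx, F) = \int_F q(\bar\xx, \bar\yy)\,\alpha(\bar\xx, \bar\yy)\,d\bar\yy + r(\bar\xx)\,\1_F(\bar\xx),
\]
with rejection probability $r(\bar\xx) = 1 - \int q(\bar\xx, \bar\yy)\,\alpha(\bar\xx, \bar\yy)\,d\bar\yy$. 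The stay-in-place contribution reduces the left-hand side of detailed balance to $\int_{F_1 \cap F_2} \pi(\bar\xx)\,r(\bar\xx)\,d\bar\xx$, which is automatically symmetric in $F_1, F_2$, so only the accepted-move part requires further argument.

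The core step is the pointwise identity
\[
\pi(\bar\xx)\,q(\bar\xx, \bar\yy)\,\alpha(\bar\xx, \bar\yy) = \pi(\bar\yy)\,q(\bar\yy, \bar\xx)\,\alpha(\bar\yy, \bar\xx)
\]
for Lebesgue-almost every $(\bar\xx, \bar\yy) \in \overline E(\uu) \times \overline E(\uu)$. Since on $\overline E(\uu)$ the target $\pi$ is proportional to $p_X(\{a_1, \dots, a_m, x_1, \dots, x_{n-m}\})$, the Hastings ratio in (\ref{eq:MH-general}) coincides with $\pi(\bar\yy)\,q(\bar\yy, \bar\xx) / (\pi(\bar\xx)\,q(\bar\xx, \bar\yy))$. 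A two-case split on which side of the inequality in (\ref{eq:MH-general}) holds at $(\bar\xx, \bar\yy)$ then reduces both sides of the identity to the common value $c(\uu)\,\min\{ p_X(\bar\xx)\,q(\bar\xx, \bar\yy),\; p_X(\bar\yy)\,q(\bar\yy, \bar\xx) \}$; degenerate boundaries where $q$ or $p_X$ vanishes are handled directly by the definition of $\alpha$. Integrating the identity over $F_1 \times F_2$ via Fubini's theorem and adding back the diagonal contribution yields the desired detailed balance equation.

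The main obstacle is largely bookkeeping, specifically the identification of $\pi$ with a constant multiple of $p_X$ on the labelled state space. The permutation sum in (\ref{eq:jn}) arises because the ground process $X$ is unordered; in the labelled vector parametrisation of the complete model $W$ on $\overline E(\uu)$, each component of $\bar\xx$ is already attached to its corresponding non-degenerate interval, so the permutation factor drops out and $\pi \propto p_X$ cleanly, matching the ratio appearing in (\ref{eq:MH-general}). Once this identification is in place, the rest is a routine Metropolis--Hastings reversibility verification, as in \citep{MengTwee96,RobeSmit94,MollWaag03}.
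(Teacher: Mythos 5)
Your proposal is correct and follows essentially the same route as the paper: both hinge on identifying $\pi \propto p_X$ on the labelled state space $\overline E(\uu)$ (so the permutation sum in (\ref{eq:jn}) plays no role) and then verifying the pointwise identity $\pi(\bar\xx)\,q(\bar\xx,\bar\yy)\,\alpha(\bar\xx,\bar\yy) = \pi(\bar\yy)\,q(\bar\yy,\bar\xx)\,\alpha(\bar\yy,\bar\xx)$ via the case split built into (\ref{eq:MH-general}). Your additional decomposition of the kernel into accepted-move and stay-in-place parts and the Fubini integration merely spell out the standard step the paper leaves implicit when passing from detailed balance to reversibility.
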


\begin{proof}[Proof.] 
Take $\bar \xx, \bar \yy$ in $\overline E(\uu)$ and assume that
$ \pi(\bar \yy) \, q(\bar \yy, \bar \xx)$ $ > \pi(\xx)\,  q(\bar \xx, \bar \yy) \geq 0$.
Then
\begin{align*}
  \pi(\bar \xx) \, q(\bar \xx, \bar \yy) \, \alpha(\bar\xx, \bar \yy) = 
  c(\uu) \, p_X(\{ a_1, \dots, a_m, x_1, \dots, x_{n-m} \}) \, q(\bar \xx, \bar \yy) =
\end{align*}
\begin{align*}
  c(\uu) \, p_X( \{ a_1, \dots, a_m, y_1, \dots, y_{n-m} \} ) \, q(\bar \yy, \bar \xx) \,
   \frac{ p_X( \{ a_1, \dots, a_m, x_1, \dots, x_{n-m} \} )  \, q(\bar \xx, \bar \yy)}
        { p_X( \{ a_1, \dots, a_m, y_1, \dots, y_{n-m} \} )  \, q(\bar \yy, \bar \xx)} 
   =  \pi(\yy) \, q(\bar \yy, \xx) \, \alpha(\bar\yy, \xx)
\end{align*}
%Using pg. 126 of \citep{moller_1}, 
writing $c(\uu)$ for the normalisation constant.
We conclude that the chain is in detailed balance and therefore reversible with 
respect to $\pi$.
\end{proof} 

Recall that the Markov chain is called $\pi$-irreducible \citep{MeynTwee09} 
if for every $\bar \xx \in \overline E(\uu)$ and every $F \subset \overline E(\uu)$ with $\pi(F) > 0$ 
there exists some natural number $\tau$ such that $P^{\tau}(\bar \xx, F) > 0$.

\begin{proposition}
Consider the set-up of Theorem~\ref{prop:cond_prob_viable} with $n>m$ and assume that 
condition (\ref{eq:data_viable}) is met. Let $Q$ be the one-step transition kernel of the Markov
chain on $\overline E(\uu)$ generated by Markov kernel $q: \overline E(\uu) \times
\overline E(\uu) \to \R^+$ in which every 
proposal is accepted. If the chain defined by $Q$ is $\pi$-irreducible and 
$q(\bar \xx, \bar \yy) = 0$ if and only if $q(\bar \yy, \bar \xx) = 0$, then the 
Metropolis-Hastings algorithm defined by $q$ and (\ref{eq:MH-general}) is $\pi$-irreducible. 
In particular, the chain of Algorithm~\ref{algo:MH} is $\pi$-irreducible when $p_X > 0$.
\label{prop:irr}
\end{proposition}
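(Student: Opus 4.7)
The plan is to show that the MH kernel $P$ inherits $\pi$-irreducibility from $Q$ by dominating $P$ from below by the sub-kernel of accepted transitions and comparing $P^{\tau}$ to $Q^{\tau}$ path-by-path. The pivotal observation is that $\alpha(\bar\xx, \bar\yy) > 0$ whenever $q(\bar\xx, \bar\yy) > 0$. Indeed, for any $\bar\xx, \bar\yy \in \overline E(\uu)$, the very definition of the state space forces the two $p_X$-values appearing in (\ref{eq:MH-general}) to be strictly positive, while the symmetry hypothesis $q(\bar\xx, \bar\yy) = 0 \Leftrightarrow q(\bar\yy, \bar\xx) = 0$ guarantees that the $q$-denominator in the Hastings ratio is positive whenever the numerator is. Expanding $P^{\tau}(\bar\xx, F)$ and $Q^{\tau}(\bar\xx, F)$ as iterated integrals along $\tau$-step paths, the $P^{\tau}$-integrand differs from the $Q^{\tau}$-integrand only by the extra product of factors $\alpha(\bar\xx_{k-1}, \bar\xx_k)$, each strictly positive wherever the corresponding $q$-factor is. The two integrands therefore share the same support, so $Q^{\tau}(\bar\xx, F) > 0$ implies $P^{\tau}(\bar\xx, F) > 0$, and the $\pi$-irreducibility of $Q$ transfers to $P$.

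\textbf{Verification for Algorithm~\ref{algo:MH}.} Assume $p_X > 0$. Then $\overline E(\uu) = \prod_{i=1}^{n-m}\bigl(\cX \cap [a_{m+i}, a_{m+i}+l_{m+i}]\bigr)$, and each factor is a non-empty relatively open set of positive Lebesgue measure. The proposal is a mixture of uniform single-coordinate updates, so $q(\bar\xx, \bar\yy)$ is non-zero precisely when $\bar\xx$ and $\bar\yy$ differ in at most one coordinate, a manifestly symmetric relation that fulfils the hypothesis. Since $p_X > 0$, the target $\pi$ and the restriction of $\ell$ to $\overline E(\uu)$ are mutually absolutely continuous, so any $F$ with $\pi(F) > 0$ also has $\ell(F) > 0$. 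Running $Q$ for $n-m$ steps while updating coordinates $1, 2, \dots, n-m$ in succession is an event of positive probability under which the terminal state is an independent uniform draw on the product box, which lands in $F$ with probability $\ell(F)/\prod_i \ell(\cX \cap [a_{m+i}, a_{m+i}+l_{m+i}]) > 0$. Hence $Q^{n-m}(\bar\xx, F) > 0$ and the general statement applies.

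\textbf{Main obstacle.} The only subtle point is this last verification: the single-step proposal of Algorithm~\ref{algo:MH} is not absolutely continuous with respect to Lebesgue measure on $\overline E(\uu)$ but is concentrated on one-coordinate-different slices. One therefore cannot compare a single-step transition directly to a Lebesgue-positive reference measure; it is only after a full cycle of $n-m$ coordinate updates that the resulting distribution has a Lebesgue density (the product of uniforms on the box), at which point positivity of the $(n-m)$-step transition into any Lebesgue-positive target becomes transparent and the general part of the proposition can be invoked.
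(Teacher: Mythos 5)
Your proof is correct and follows essentially the same route as the paper: you verify the symmetry of the single-coordinate proposal and show that the always-accept chain reaches any set of positive $\pi$-measure in $n-m$ steps by updating each coordinate in turn, which yields exactly the paper's lower bound $\bigl(\tfrac{1}{n-m}\bigr)^{n-m}\prod_{i}\ell\bigl(\cX\cap[a_{m+i},a_{m+i}+l_{m+i}]\bigr)^{-1}$ on the $(n-m)$-step proposal kernel. The only difference is that where the paper simply cites \citet[Theorem~3.ii]{RobeSmit94} for the general transfer of $\pi$-irreducibility from $Q$ to the Metropolis--Hastings chain, you supply the standard argument behind that result (positivity of the acceptance probability on the support of the proposal, plus a path-by-path comparison of $P^{\tau}$ with $Q^{\tau}$), which is a valid, self-contained substitute.
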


\begin{proof}[Proof.] 
The first part follows from \citep[Theorem~3.ii]{RobeSmit94}. 

For Algorithm~\ref{algo:MH}, $q(\bar \xx, \bar \yy) > 0$ only if
$\bar \xx$ and $\bar \yy$ in 
$\overline E(\uu)$ differ in at most a single component. Thus, assume that $x_j = y_j$ 
for all $j\neq i \in \{ 1, \dots, n-m \}$ and $x_i \neq y_i$. Then $q(\bar \xx, \bar \yy) 
= q(\bar \yy, \bar \xx)$ so they are strictly positive or zero together. Write $Q^{\tau}$ 
for the $\tau$-step transition kernel of the aways-accept chain. Then, for $\bar \xx$, 
$\bar \yy \in \overline E(\uu)$,
\[
q^{n-m}(\bar \xx, \bar \yy) \geq 
\left(\frac{1}{n-m}\right)^{n-m} \,\prod_{i=1}^{n-m} \frac{1}{\ell( \cX \cap
[a_{m+i}, a_{m+i} + l_{m+i} ])} > 0
\]
by changing each component in turn. We conclude that the Markov chain of
Algorithm~\ref{algo:MH} is $\pi$-irreducible.
\end{proof}

Recall that a $\pi$-irreducible Markov chain is called aperiodic \citep{MeynTwee09}
if the state space ($\overline E(\uu)$ in our case) cannot be partitioned into 
measurable sets $B_0$, $B_1, \dots B_{d-1}$ such that 
$\pi(\overline E(\uu) \setminus \cup_{j=0}^{d-1} B_j) = 0$ and 
$P(\bar \xx, B_{j+1\,\text{mod}\,d })$ $ = 1$ for all $\bar \xx \in B_j$ (for some $d>1$,
the period). By \citep[Proposition~7.6]{MollWaag03}, a $\pi$-irreducible Markov chain
with invariant probability distribution $\pi$ is aperiodic if and only if
for some small set $D$ with $\pi(D)>0$ and some $\tau\in\N$, the following holds:
\(
P^i(\bar \xx, D) > 0
\)
for all $\bar \xx\in D$ and $i\geq \tau$.

\begin{proposition}
Consider the set-up of Theorem~\ref{prop:cond_prob_viable} with $n>m$ and assume that 
condition (\ref{eq:data_viable}) is met. If $0 < p_X(\{ a_1, \dots, $ $a_m,$  
$x_1, \dots, x_{n-m} ) \leq \delta$ for some $\delta > 0$ and all $\bar \xx
\in \overline E(\uu)$, then the Metropolis-Hastings Markov chain of 
Algorithm~\ref{algo:MH} is aperiodic.
\label{prop:aperiodic}
\end{proposition}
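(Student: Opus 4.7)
The plan is to verify the small-set criterion stated just before Proposition~\ref{prop:aperiodic}: exhibit a small set $D \subset \overline E(\uu)$ with $\pi(D) > 0$ and an integer $\tau$ such that $P^i(\bar \xx, D) > 0$ for every $\bar \xx \in D$ and every $i \geq \tau$. The natural choice is $\tau = n - m$, the smallest number of updates that can touch every coordinate.

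To construct $D$, note that because $p_X > 0$ everywhere on $\overline E(\uu)$ and condition (\ref{eq:data_viable}) holds, some $\epsilon \in (0, \delta]$ can be chosen for which $A_\epsilon = \{\bar \xx \in \overline E(\uu) : p_X \geq \epsilon\}$ has positive Lebesgue measure. I would take $D$ to be the intersection of $A_\epsilon$ with a small product of axis-aligned open sub-intervals around a Lebesgue density point of $A_\epsilon$, shrunk further if necessary so that (i) $D$ has positive Lebesgue measure, and (ii) every axis-parallel one-dimensional section of $D$ through any $\bar \yy \in D$ has positive Lebesgue measure. The Lebesgue density theorem makes such a choice possible, and the bound $0 < p_X \leq \delta$ together with compactness of the closures of the coordinate intervals ensures that $D$ is small in the Markov-chain sense.

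With $D$ in hand I would perform two positivity checks. First, $P^{n-m}(\bar \xx, D) > 0$ for $\bar \xx \in D$: with probability $(n-m)!/(n-m)^{n-m}$ the $n-m$ updates pick each coordinate exactly once; the proposals are uniform on their respective intervals; every acceptance probability is strictly positive because the ratio $p_X(\mathrm{new})/p_X(\mathrm{old})$ is finite and positive throughout (using $0 < p_X \leq \delta$); and the final configuration lands in $D$ with positive probability. Second, $P(\bar \yy, D) > 0$ for $\bar \yy \in D$: pick some coordinate $i$, propose uniformly into the positive-measure section of $D$ through $\bar \yy$ guaranteed by (ii), and accept with positive probability. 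A short induction then gives $P^{n-m+k}(\bar \xx, D) \geq \int_D P^{n-m+k-1}(\bar \xx, d\bar \yy) \, P(\bar \yy, D) > 0$ for every $k \geq 0$, which completes the verification.

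The hard part will be the construction of $D$: three constraints must hold simultaneously---a quantitative floor $p_X \geq \epsilon$ on $D$ (to keep acceptance ratios under control), positive Lebesgue measure of $D$ (so that uniform proposals can land there), and positive one-dimensional sections through each point of $D$ (so that a single-coordinate update from inside $D$ has a chance to return to $D$). Without the last property the one-step induction collapses, since the proposal mechanism updates only one coordinate at a time. Balancing these requirements against the potentially irregular geometry of $A_\epsilon$, which need not be open, is the reason for appealing to the Lebesgue density theorem.
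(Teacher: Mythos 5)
Your overall strategy (exhibit a small set $D$ with positive mass and show $P^i(\bar\xx,D)>0$ for all large $i$) is the same as the paper's, but the two properties on which your whole argument rests are asserted rather than proved, and neither follows from the tools you cite. First, property (ii) --- that \emph{every} axis-parallel one-dimensional section of $D$ through \emph{every} point of $D$ has positive length --- is not delivered by the Lebesgue density theorem. Since $p_X$ is only measurable, the superlevel set $A_\epsilon$ can have full density at a point and still have a null section through that very point (in two dimensions, take a full-measure set from which a horizontal null line has been removed except for one of its points); intersecting with a small box around a density point does not cure this, and removing the bad points changes the sections again, so the construction does not terminate after one cleansing step. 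Since you yourself observe that the one-step return $P(\bar\yy,D)>0$ collapses without (ii), this is a genuine hole, not a technicality. Second, the claim that $D$ is small ``because $0<p_X\le\delta$ and the coordinate intervals have compact closure'' is unsupported. A $\tau$-step minorization uniform over $\bar\yy\in D$ requires lower-bounding the acceptance probabilities along paths whose intermediate states mix coordinates of $\bar\yy$ with proposed ones; these mixed states need not lie in $A_\epsilon$, and the hypotheses give no uniform lower bound on $p_X$ there (only $p_X>0$ pointwise and $p_X\le\delta$), so the product of acceptance bounds can degenerate as $\bar\yy$ varies over $D$. Smallness is exactly what the invoked criterion needs, so it cannot be waved through.

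The paper sidesteps both difficulties by choosing a lower-dimensional slice instead of a full-dimensional $D$: fix $x_2,\dots,x_{n-m}$ (which exist by (\ref{eq:data_viable})) and let $C=\{(\xi,x_2,\dots,x_{n-m}):\xi\in\cX\cap[a_{m+1},a_{m+1}+l_{m+1}]\}$. From any $\bar\yy\in C$, one step that selects the first coordinate proposes uniformly on that interval and is accepted with probability at least $\pi(\xi,x_2,\dots,x_{n-m})/(c(\uu)\delta)$ --- here the upper bound $p_X\le\delta$ is used, and crucially the bound does not depend on the current first coordinate --- so $P(\bar\yy,F)\ge \mathrm{const}\cdot\mu(F)$ with $\mu$ the measure carried by $C$. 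This gives a genuine one-step minorization ($\tau=1$), no section property is needed, and iteration yields $P^i(\bar\yy,C)>0$ for all $i$. If you want to keep your positive-Lebesgue-measure $D$, you would have to actually construct a set with the every-point section property and prove a uniform minorization on it; as it stands, both steps are missing, so the proposal does not yet constitute a proof.
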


\begin{proof}[Proof.]
Let $\xi$ be the point on $\cX \cap [a_{m+1}, a_{m+1} + l_{m+1}]$ that replaces $x_1$. By (\ref{eq:data_viable}), there exist $x_2, \dots, x_{n-m}$ such that 
\[
 \int_{ \cX \cap [ a_{m+1},  a_{m+1}+l_{m+1}] } 
c(\uu) \,
 p_X( \{ a_1, \dots, a_m, \xi, x_2, \dots, x_{n-m}) \, d\xi
 = \int_{ \cX \cap [ a_{m+1},  a_{m+1}+l_{m+1}] } 
  \pi(\xi, x_2, \dots, x_{n-m} ) \, d \xi 
\]
is strictly positive.
Define a measure $\mu$ on the Borel $\sigma$-algebra on $\overline E(\uu)$ by
\[
\mu(F) =  \int_{ \cX \cap [ a_{m+1},  a_{m+1}+l_{m+1}] } 
\1\{ (\xi, x_2, \dots, x_{n-m}) \in F \} \,
  \pi(\xi, x_2, \dots, x_{n-m} ) \, d\xi
\]
and note that $\mu(\overline E(\uu) ) > 0$.
Set $C = \{ ( \xi, x_2, \dots, x_{n-m}) :  \xi \in \cX \cap [a_{m+1}, a_{m+1} + l_{m+1}] \}$.
We claim that $C$ is small with respect to $\mu$. To see this, 
take $\bar \yy = (y, x_2, \dots, x_{n-m}) \in C$ and note that for $F \subset \overline E(\uu)$, the transition probability
\(
P(\bar \yy, F) 
\)
is at least
\[
\frac{1}{n-m} \frac{1}{\ell(\cX \cap [a_{m+1},a_{m+1}+l_{m+1}] )} 
\int_{ \cX \cap [ a_{m+1},  a_{m+1}+l_{m+1}] } 
\1_F(\xi, x_2, \dots, x_{n-m}) \,
\alpha_1( (y, x_2, \dots, x_{n-m}), \xi) \, d\xi.
\]
If $\pi(\xi, x_2, \dots, x_{n-m}) > \pi(y, x_2, \dots, x_{n-m})$,
then $\alpha_1( (y, x_2, \dots, x_{n-m}), \xi) = 1 > 
\pi(\xi, x_2, \dots, x_{n-m}) / (c(\uu) \, \delta)$, again
writing $c(\uu)$ for the normalisation constant. Otherwise, 
$\alpha_1( (x_1, \dots, x_{n-m}), \xi) = 
\pi(\xi, x_2, \dots, x_{n-m}) / $ 
$\pi(x_1, x_2, \dots, x_{n-m}) 
>  \pi(\xi, x_2, \dots, x_{n-m}) / (c(\uu) \, \delta )$. In summary, 
\[
P(\bar \xx, F) > 
\frac{1}{n-m} \frac{1}{\ell(\cX \cap [a_{m+1},a_{m+1}+l_{m+1}])} \, \frac{\mu(F)}
{c(\uu) \, \delta}
\]
so $C$ is small with respect to $\mu$.  Moreover, $\pi(C) = \mu(\overline E(\uu)) > 0$.
Iterating the above argument one notices that  $P^\tau(\bar \xx,F)$ is at least
as large as the $\tau$-th power of the bound above, an observation that completes
the proof.
\end{proof}

In conclusion, from almost all initial states, Algorithm~\ref{algo:MH} converges
in total variation to the invariant probability distribution. Conditions for 
general proposal kernels $q$ can be found in \citep[Chapter~7]{MollWaag03} or
\citep[Theorem~3]{RobeSmit94}.

\subsection{Prior model parameters}
\label{sec:EMalgo}

In Section~\ref{sec:state}, we discussed Monte Carlo methods to sample from the
posterior distribution of $W$ or $X$ given $U=\uu$. This distribution is defined
in terms of  the prior probability density function $p_X$. 
%Recall that the probability density function of $(W, U)$ is singular as $U$ is 
%concentrated on a single pattern defined by $W$. However, $W$ is absolutely 
%continuous with respect to a Poisson process on $\cN_{\cX \times(\R\times\R^+)}$ 
%with intensity measure $\ell \times \nu$.  We assume that the prior $p_X$ is 
Typically, $p_X$ is given in unnormalised form and depends on a parameter 
vector $\theta$, that is,
\(
p_X(\xx; \theta) = c(\theta) h_X( \xx; \theta)
\)
for an explicit function $h_X: \cN_\cX \to \R^+$. When $\theta$ is treated as
an unknown, since the likelihood 
function for $\theta$ contains the latent marked point process $W$, we call on techniques
from missing data analysis.

%The joint probability density 
%function is 
%\[
%f(\ww, \uu) = c(\theta) h_X( \{ x_i: (x_i, I_i) \in \ww \}; \theta ) 
%\1\{ \uu = \cup_{(x,I)\in \ww} (x+I) \}
% \]
%with respect to a Poisson process on $\cN_{\cX \times(\R\times\R^+)}$ with 
%intensity measure $\ell\times \nu$. Here
%\[
%c(\theta)^{-1} = \sum_{k=0}^\infty \frac{e^{-\ell(\cX)}}{k!} 
%\int_{(\cX \times (\R\times\R^+))^k} h_X( \{ x_1, \dots, x_k \}; \theta )
%dx_1 d\nu(I_1) \times \cdots \times dx_k d\nu(I_k).
%\]
%The integration with respect to $\nu$ may be omitted as it is a probability
%measure and therefore integrates to one.

The likelihood function $l(\theta)$ is obtained from the proof of Theorem~\ref{prop:cond_prob_viable} 
by taking  $A$ equal to $\cN_\cX$ in equation (\ref{eq:final_rhs}). Disregarding
terms that do not depend on $\theta$, one obtains $l(\theta; \uu) = c(\theta) c(\theta|\uu)^{-1}$
where
% $e^{-|W|} p_X$ 
%\[
%e^{-\ell(\cX)} 
%\left( \frac{\EE Z_1}{\EE T_1} \right)^m 
%\left( \frac{\EE Y_1}{\EE T_1} \right)^{n-m} 
%\prod_{i=1}^{n-m} \frac{ f_Y(l_i) }{ \EE Y_1}  \times
%\]
%\[
% c(\theta) \int_{\cX^{n-m}}
%h_X( \{ a_1, \dots, a_m, x_1, \dots, x_{n-m} \}; \theta ) 
%\left(
%\sum_{ \substack{D_1, \dots, D_{n-m}   \\
%       \cup_i \{ D_i \} = \{ 1, \dots, n-m \} } }
%\1\{ x_{D_i} \in [a_{m+i}, a_{m+i}+l_{m+i}] \} 
%\right) \prod_{i=1}^{n-m} dx_i.
%\]
\[
c(\theta)^{-1} = \sum_{k=0}^\infty \frac{e^{-\ell(\cX)}}{k!} 
\int_{\cX^k} h_X( \{ x_1, \dots, x_k \}; \theta ) \,
dx_1 \dots dx_k
\]
and
\(
c(\theta|\uu)^{-1} 
\)
is given by
\[
 \int_{\cX^{n-m}}
h_X( \{ a_1, \dots, a_m, x_1, \dots, x_{n-m} \}; \theta ) 
\left(
\sum_{ \substack{D_1, \dots, D_{n-m}   \\
       \cup_i \{ D_i \} = \{ 1, \dots, n-m \} } }
\1\{ x_{D_i} \in [a_{m+i}, a_{m+i}+l_{m+i}] \} 
\right) \prod_{i=1}^{n-m} dx_i.
\]
One observes that $c(\theta|\uu)$ is equal to the normalisation constant on
$\overline E(\uu)$ of the non-atomic part of the posterior distribution
of $X$ given $U=\uu$.  

To handle the two normalisation constants, it is necessary to look
at the log relative likelihood $L(\theta)$ of $U$ with respect to some fixed 
and user-selected reference parameter $\theta_0$,
Then, as in \citep{GelfCarl93,Geye99},
\[
L(\theta) = \log \left[ \frac{c(\theta) \, c(\theta_0|\uu) }{c(\theta_0) \, c(\theta | \uu)}
\right] =
\log \EE_{\theta_0} \left[
\frac{h_X(X; \theta)}{h_X(X; \theta_0)} \mid U =  \uu
\right] 
-
\log \EE_{\theta_0} \left[
\frac{h_X(X; \theta)}{h_X(X; \theta_0)}
\right] .
\]
Being expressible in terms of expectations under the reference parameter, the log 
likelihood ratio can be approximated by Markov chain Monte Carlo methods. Note that
two samples are required: one from the posterior distribution of $X$ and one from
the prior. For the latter, provided $p_X$ is locally stable, classic 
Metropolis--Hastings methods based on births and deaths apply \citep{Geye99,MollWaag03}. 
If the conditional intensity is monotone, exact simulation can be carried out 
\citep{KendMoll00,LiesBadd02}. 

\section{Examples}
\label{sec:sim}

In this section, we present a few examples to illustrate how the choice of prior affects state estimation. Calculations were carried out using the
{\tt C++} marked point process library MPPLIB, developed by Steenbeek et al. 
For $p_X$ we choose the area-interaction point process \citep{WidoRowl70, Badd95},
a model that favours clustered, regular and random realisations depending on parameter values. Specifically, this model has probability density function
\begin{align}
    p(\mathbf{x}) = \alpha \beta^{n(\mathbf{x})}\exp\left[-\log \gamma\, 
       \ell( \cX \cap U_r(\mathbf{x}) ) \right]
    \label{eq:area_int}
\end{align}
with respect to a unit rate Poisson process on $\mathcal{X}$.
Here $U_r(\mathbf{x}) = \bigcup_{i=1}^n B(x_i, r)$ where $B(x_i,r)$ is the closed
interval $[ x_i-r, x_i + r]$. When $\gamma < 1$, realisations tend to be regular, 
for $\gamma > 1$ clustered. When $\gamma = 1$, one has a Poisson process with 
intensity $\beta$. The scalar $\alpha$ is a normalisation constant. 
Realisations can be obtained by Kendall's dominated coupling from the past (CFTP) 
algorithm \citep{Kend98} developed initially from the perfect simulation methods 
of Propp and Wilson for coupled Markov chains \citep{ProppWilson96}. 

\subsection{Toy example}
%0.45,0.85
%0.51,0.51
%0.58,0.58
Consider data $\uu = \{ (0.45, 0.4), (0.51, 0), (0.58, 0) \}$ that consist of two atoms
and a single non-degenerate interval. By the discussion at the end of 
Section~\ref{sec:posterior}, for a Poisson prior ($\gamma = 1$), the posterior distribution of the location $X_3$ in $\cX = (0,1)$ 
that generated the non-degenerate interval is uniformly distributed. 
To see the effect of informative priors, 
Figure~\ref{fig:clustered_structure} plots the posterior distribution of $X_3$
when the prior is an area-interaction model with $\eta = 2 r \log \gamma = 1.2$ and
$r=0.1$. Note that mass is shifted to the left side of the interval due to the 
presence of atoms. For $\eta = -1.2$ and $r=0.1$, the atoms repel $X_3$, 
resulting in mass being shifted to the right side of the interval
(cf.\ Figure ~\ref{fig:regular_structure}). To carry out 
the state estimation, we ran Algorithm~\ref{algo:MH} with a burn-in
of 10,000 steps and calculated the histograms based on the subsequent
100,000 steps.

\begin{figure}[H]
    \begin{subfigure}{.45\textwidth}
        \centering
        \includegraphics[width=\textwidth]{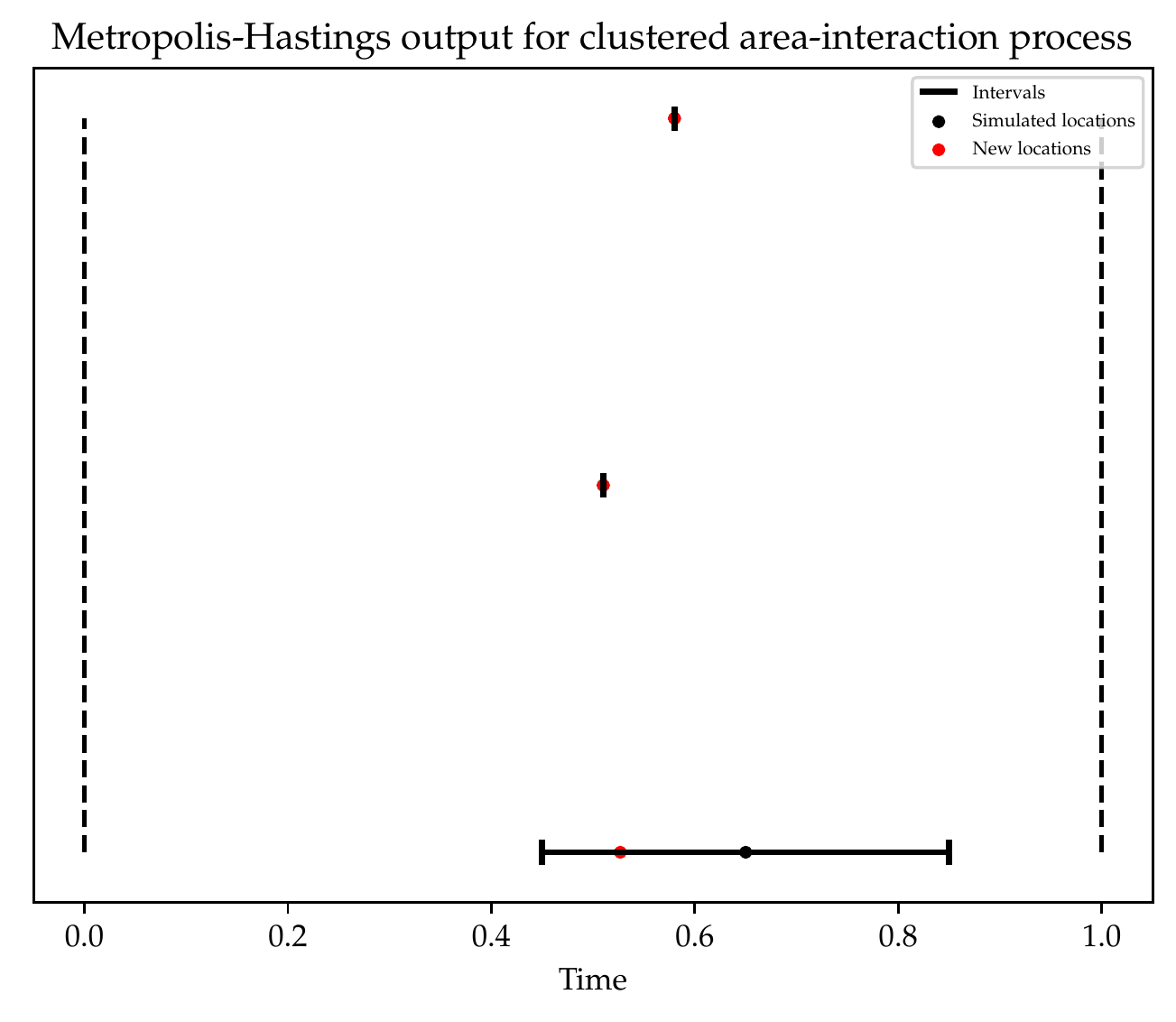}
    \end{subfigure}
    \begin{subfigure}{.45\textwidth}
        \centering
        \includegraphics[width=\textwidth]{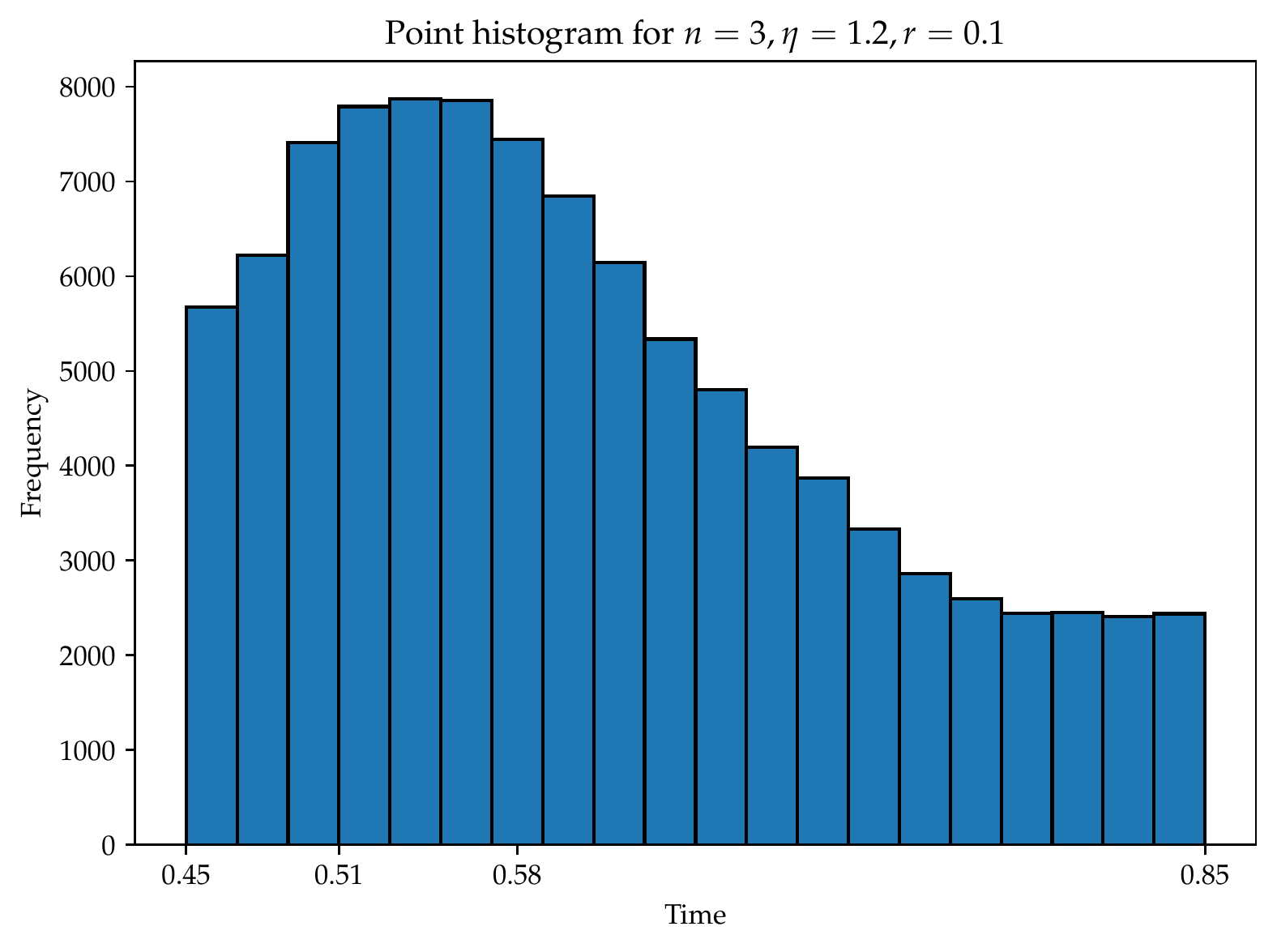}
    \end{subfigure}
        \caption{Locations of two atoms and a spanning interval together with a histogram of point locations. The interval start and end points as well as the atom locations are marked on the histogram $x$-axis.}
    \label{fig:clustered_structure}
\end{figure}

\begin{figure}[H]
    \begin{subfigure}{.45\textwidth}
        \centering
        \includegraphics[width=\textwidth]{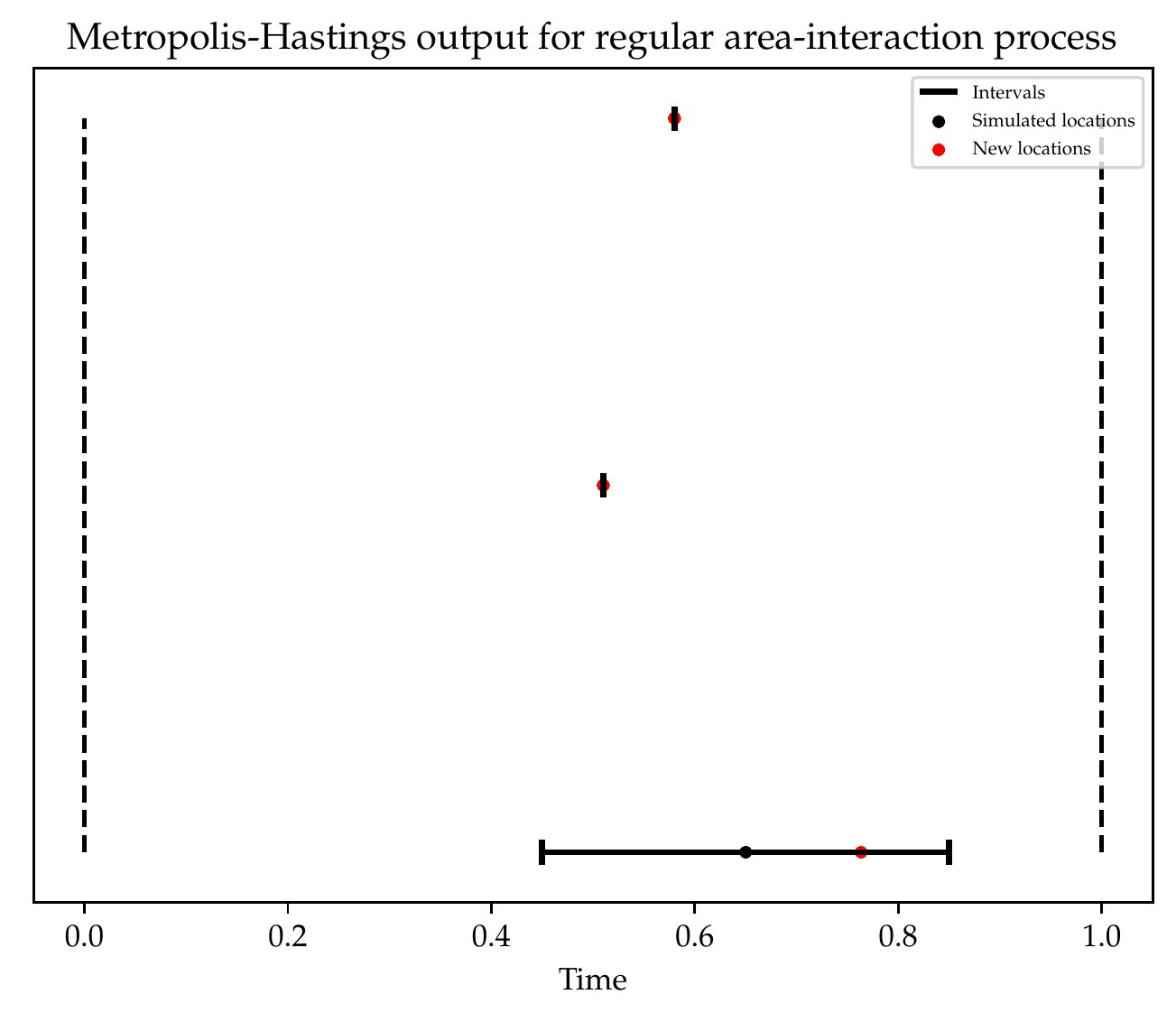}
    \end{subfigure}
    \begin{subfigure}{.45\textwidth}
        \centering
        \includegraphics[width=\textwidth]{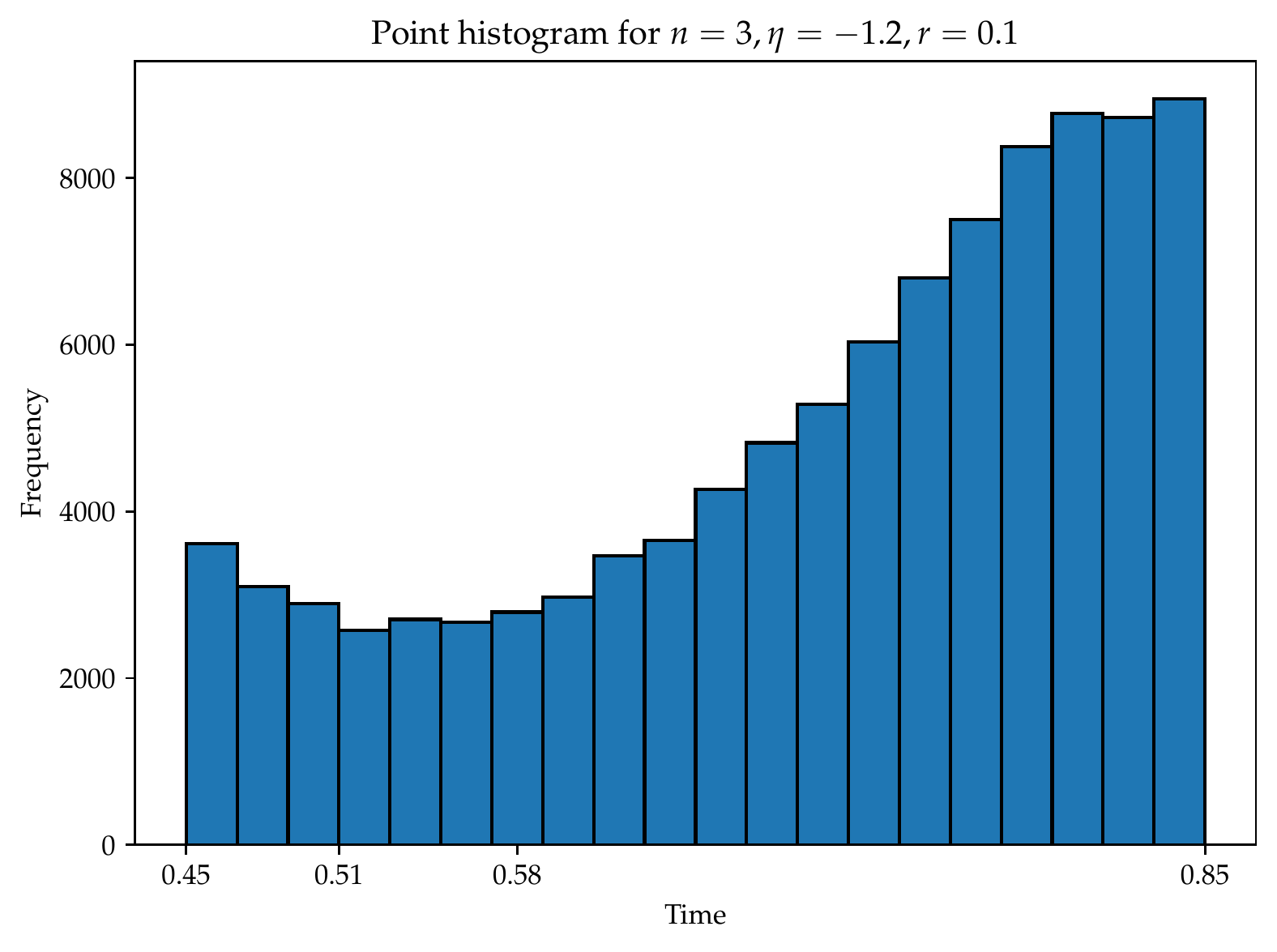}
    \end{subfigure}
        \caption{Locations of two atoms and a spanning interval together with a histogram of point locations. The interval start and end points as well as the atom locations are marked on the histogram $x$-axis.}
    \label{fig:regular_structure}
\end{figure}

\subsection{Area-interaction gamma model}

The left-most panel in Figure~\ref{fig:random_plot} shows a simulation in $\cX = (0,1)$ from 
$U$ in a model where $X$ is an area-interaction process with parameters 
$\beta = 12$, $\eta = 2 r \log \gamma = 0$ and $r=0.05$ marked by a mixture distribution 
$\nu$ in which the atom probability is $p=0.2$ and $f_Y$ is the probability density 
function of a Gamma distribution with shape parameter $k=2.5$ and rate parameter 
$\lambda=0.07$. 
The points shown as black dots are the points of $X$ in the simulated pattern, the red
points constitute a realisation from the posterior distribution of $X$ given $U$ 
obtained by running Algorithm~\ref{algo:MH} for 10,000 steps. The points seem 
to settle in a random manner within the intervals.

\begin{figure}[H]
    \begin{subfigure}{.45\textwidth}
        \centering
        \includegraphics[width=\textwidth]{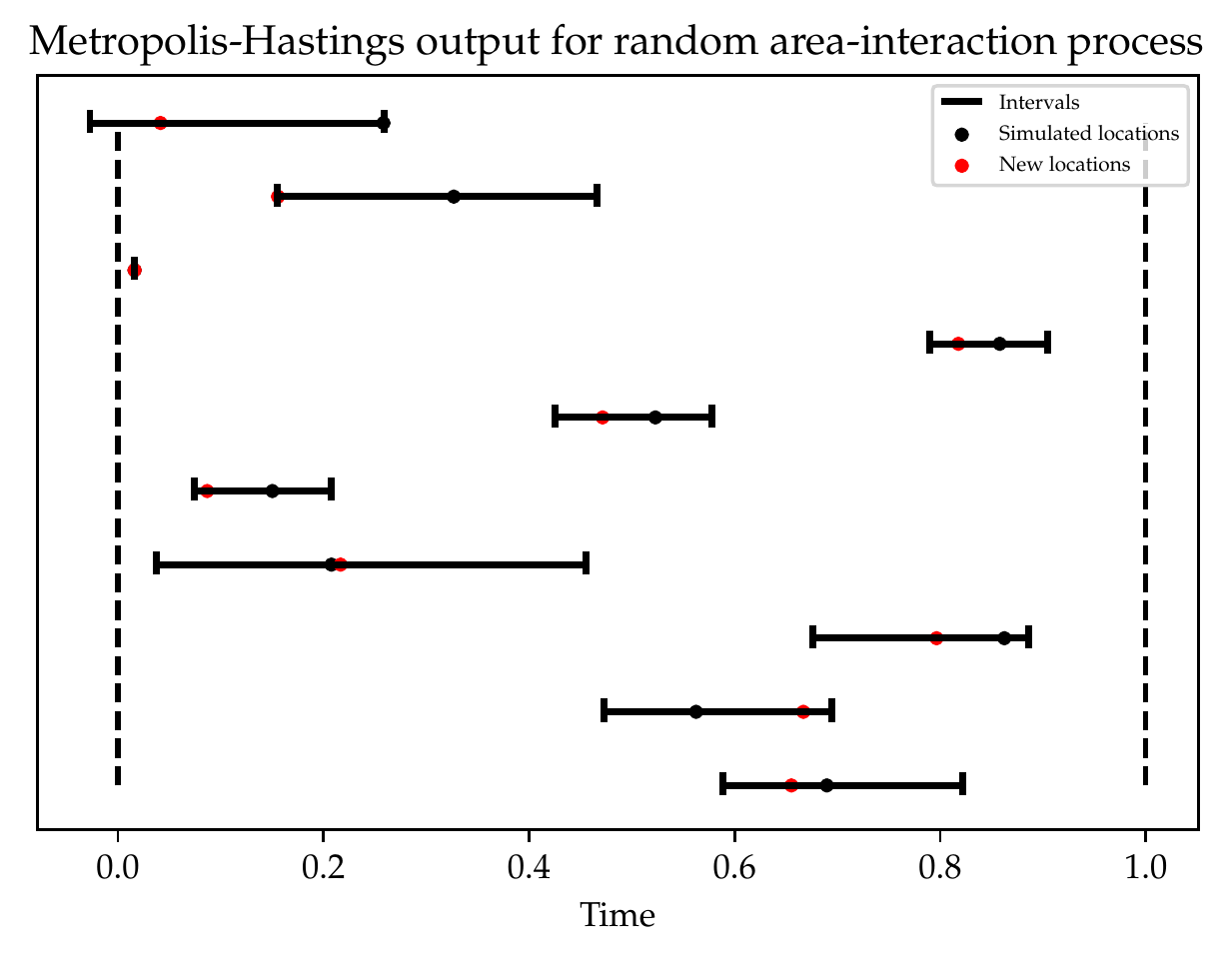}
    \end{subfigure}
    \begin{subfigure}{.45\textwidth}
        \centering
        \includegraphics[width=\textwidth]{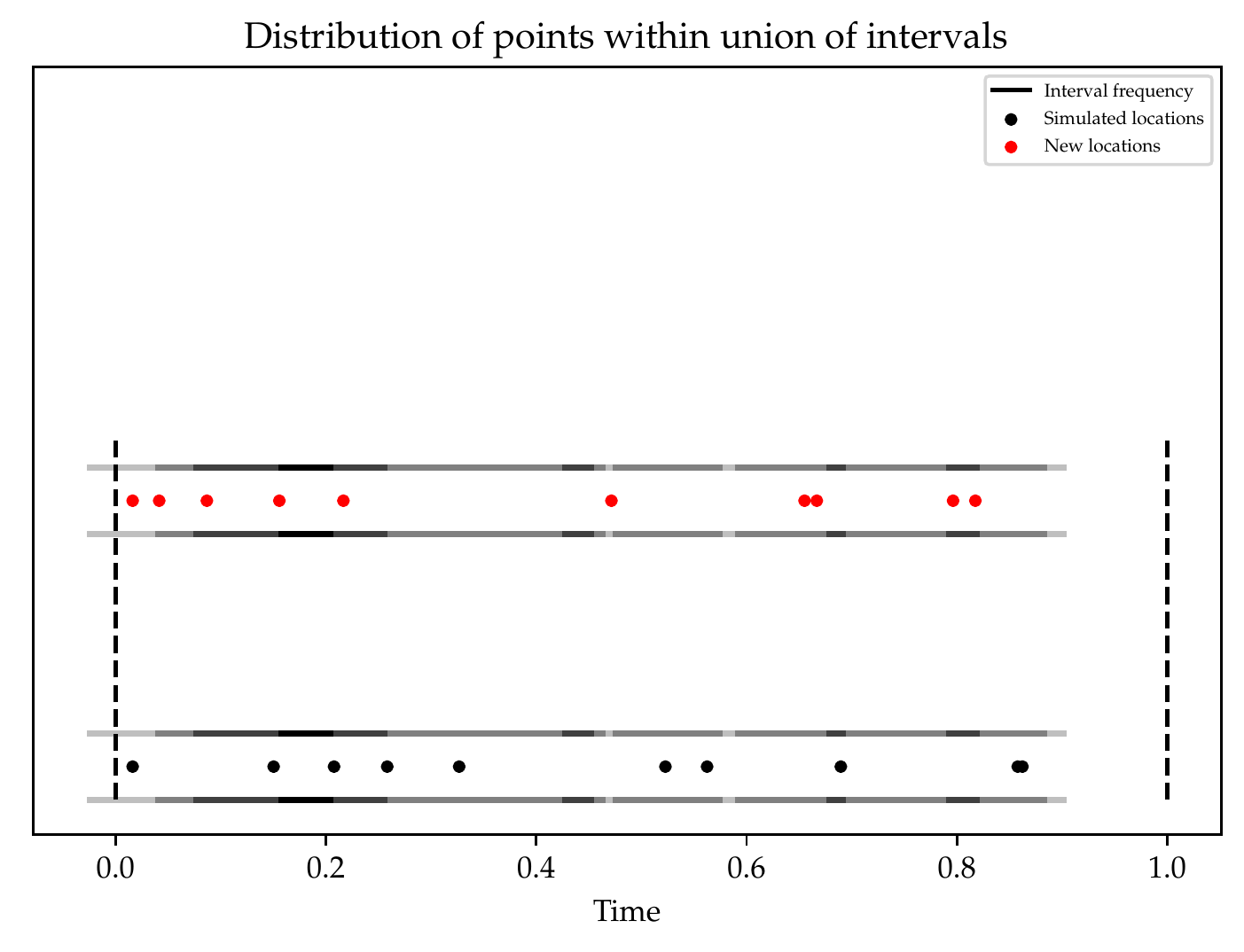}
    \end{subfigure}
        \caption{Plots of the simulated and new locations of a random area-interaction point process. Parameter values: $(\beta, \eta, r, \lambda, k, p) = (12, 0, 0.05, 0.07, 2.5, 0.2)$.}
    \label{fig:random_plot}
\end{figure}

A simulation using a prior favouring clustering can be found in black in 
Figure~\ref{fig:clustered_plot}. The parameter settings were as before except that 
$\eta = 1.2$. In red, a realisation from the posterior distribution is shown, obtained after 10,000
steps from the Metropolis--Hastings algorithm.  Figure~\ref{fig:clustered_plot} shows the effect of the complex underlying geometry when choosing proposal 
points within parametrised intervals. 
The algorithm tends to move proposed times to areas where multiple intervals intersect, 
leading to clustering within these regions. 

\begin{figure}[H]
    \begin{subfigure}{.45\textwidth}
        \centering
        \includegraphics[width=\textwidth]{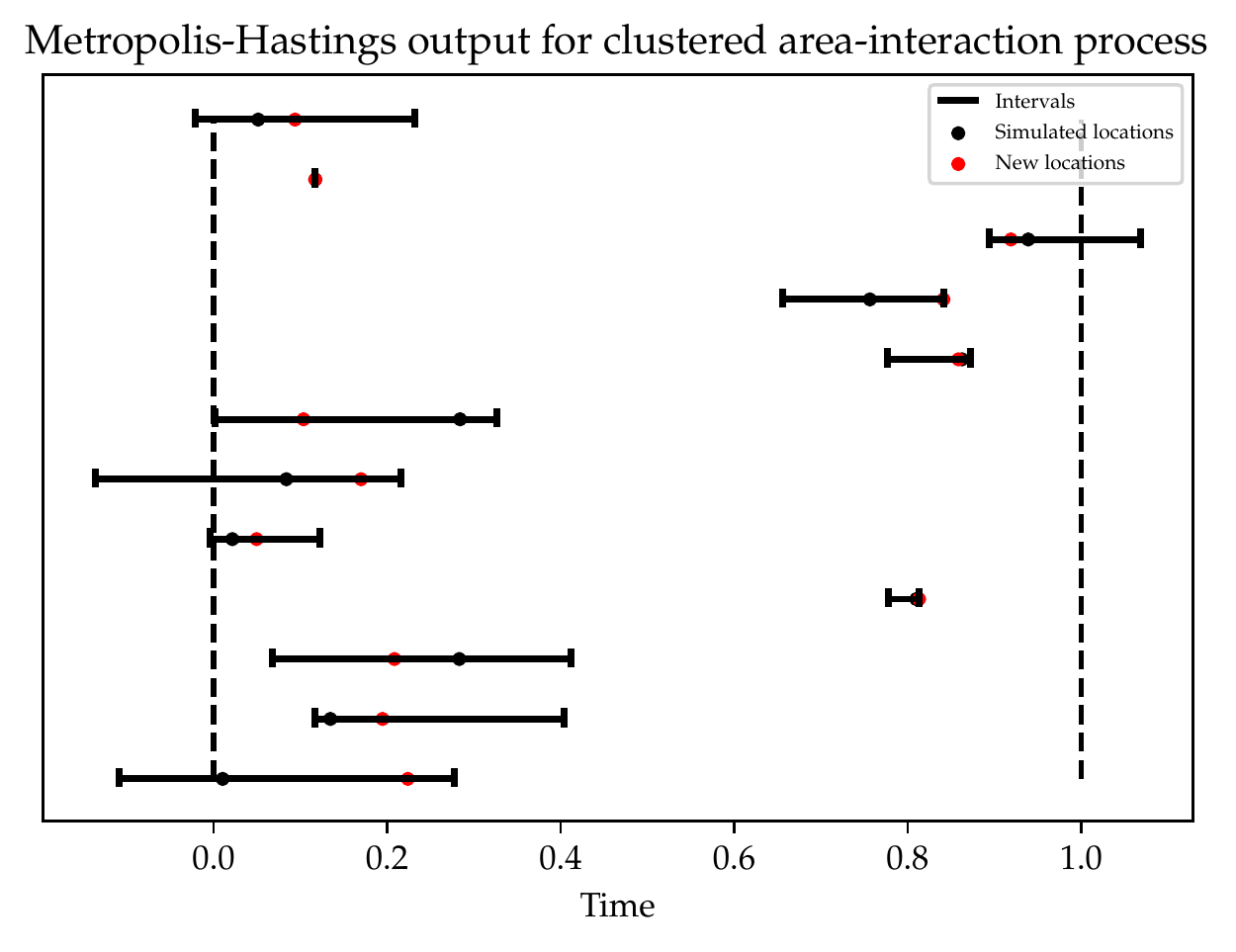}
    \end{subfigure}
    \begin{subfigure}{.45\textwidth}
        \centering
        \includegraphics[width=\textwidth]{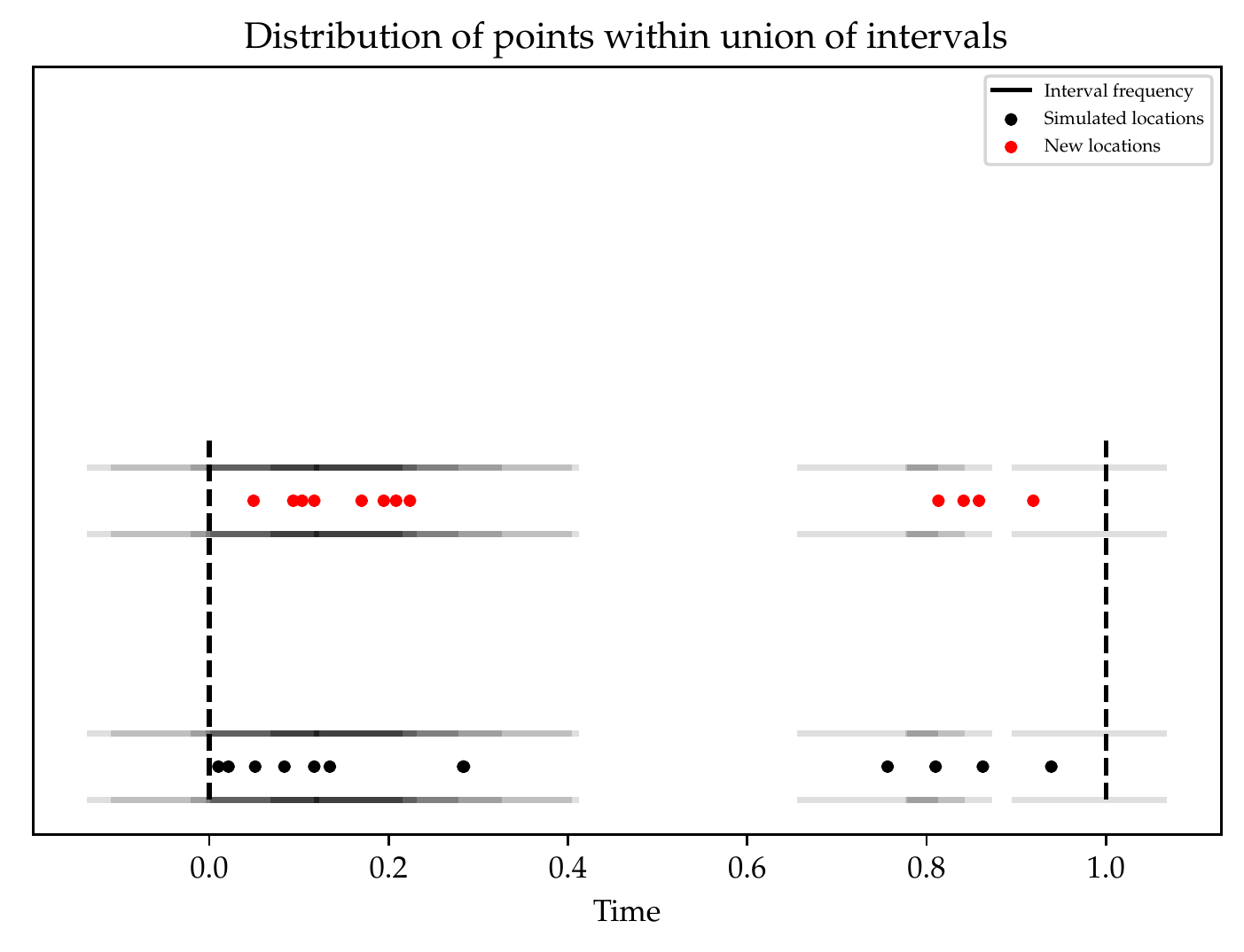}
    \end{subfigure}
        \caption{Parameter values: $(\beta, \eta, r, \lambda, k, p) = (12, 1.2, 0.05, 0.07, 2.5, 0.2)$.}
    \label{fig:clustered_plot}
\end{figure}

Figure~\ref{fig:regular_plot} shows, in black, a simulation from $U$ for a regular 
area-interaction prior with $\eta = 2 r \log \gamma = -1.2$. A realisation from the posterior 
distribution of $X$ is shown in red. The structure of the prior point process is 
maintained in the posterior, with points being spread out from each other. 

\begin{figure}[H]
    \begin{subfigure}{.45\textwidth}
        \centering
        \includegraphics[width=\textwidth]{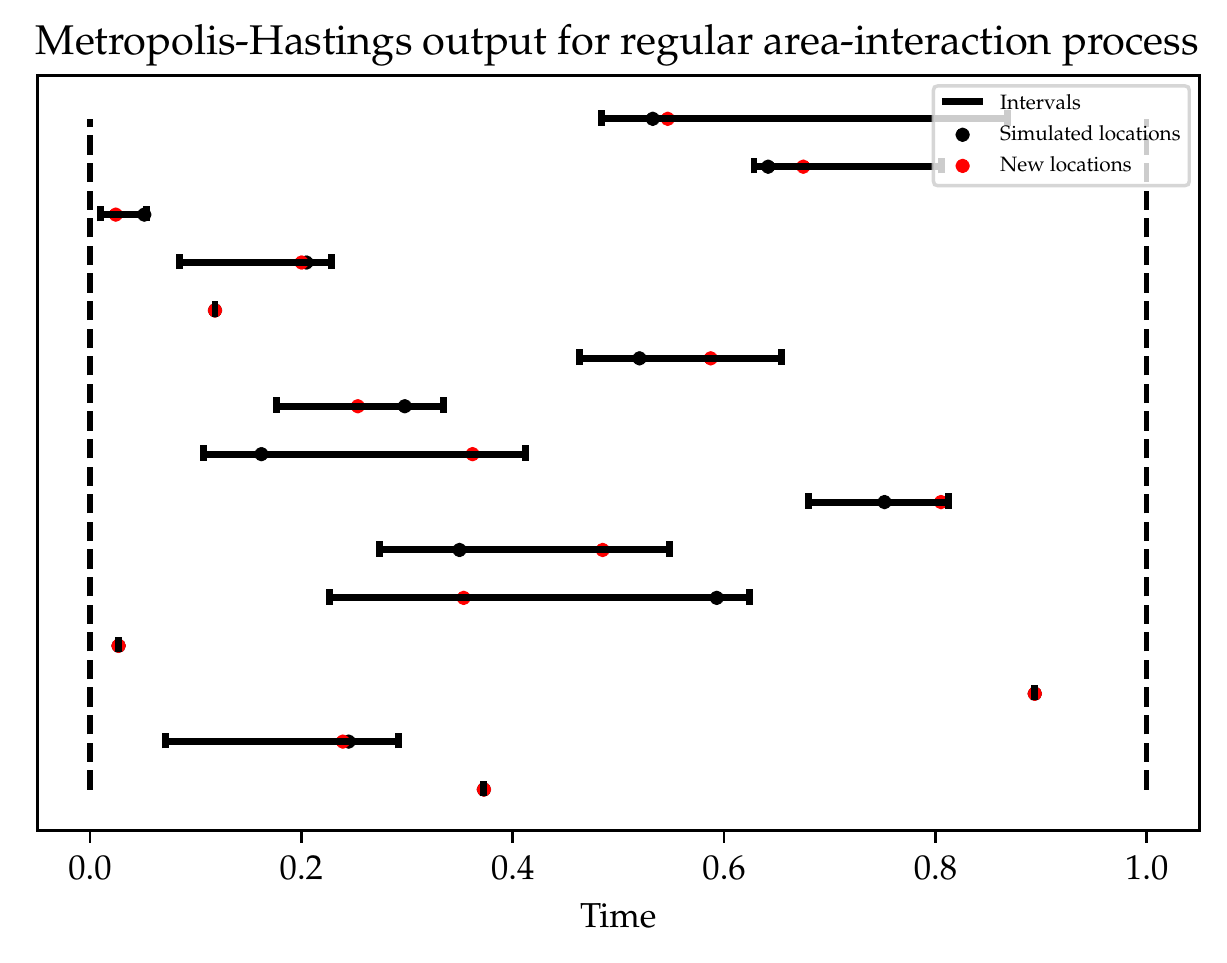}
    \end{subfigure}
    \begin{subfigure}{.45\textwidth}
        \centering
        \includegraphics[width=\textwidth]{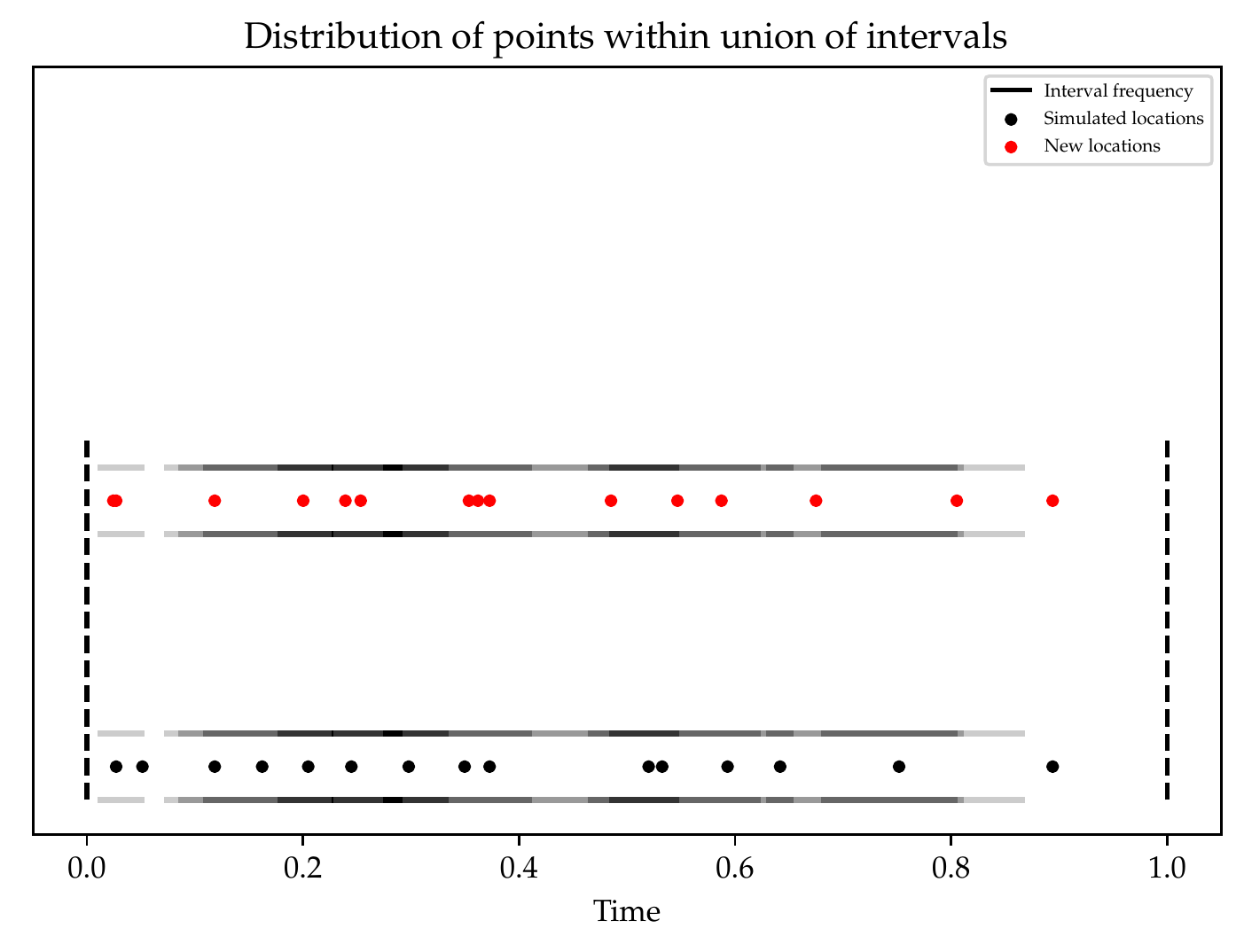}
    \end{subfigure}
        \caption{Parameter values: $(\beta, \eta, r, \lambda, k, p) = (12, -1.2, 0.05, 0.07, 2.5, 0.2)$.}
    \label{fig:regular_plot}
\end{figure}

\section{Conclusion}

In this work, a Bayesian inference framework for aoristic data was
introduced in which an alternating renewal process is used to interval
censor temporal data, converting it into a marked point process model. 
A prospective point, which cannot be observed directly, was paired with 
an interval within which the point surely lies. State estimation was
then applied to best estimate the location of this point. Theory was
developed regarding the distribution of these marks based on this renewal
framework and the posterior distribution  deduced. The fact that the
forward model allows for a mixture of discrete and absolutely continuous
components makes this process nontrivial. A state estimation procedure 
was outlined in the form of a Metropolis-Hastings algorithm for a fixed
number of points, after which ergodicity properties were verified. Using 
an area-interaction prior, this procedure was applied to sample from the
posterior distribution. Effects of the prior are clearly present when
sampling from the complete model.

Throughout, we assumed that all intervals corresponding to a point in
$\cX$ were observed. Returning to a criminology context, sampling bias 
may arise since the data may contain only intervals whose right-most 
point is in a given interval. Additionally, a random labelling regime was
assumed. It might be more realistic to have location-dependent independent
marking, for example based on a semi-Markov process rather than an
alternating renewal process. Furthermore, spatial aspects were completely
ignored. These generalisations will form the topic for our future research.

\section*{Acknowledgements}

This research was funded by NWO, the Dutch Research Council (grant OCENW.KLEIN.068).

%\bibliography{mybib}
% \bibliographystyle{unsrt}

\end{document}